\documentclass[12pt]{article}
\usepackage{lingmacros}
\usepackage{amssymb}
\usepackage{amsmath}
\usepackage{geometry}
\usepackage{setspace}
\usepackage{amsthm}
\usepackage{mathrsfs}
\usepackage{changepage}
\usepackage{listings}
\usepackage{pdflscape}
\usepackage{graphicx}
\usepackage{mathrsfs}
\usepackage[hidelinks]{hyperref}
\setcounter{MaxMatrixCols}{50}
\geometry{
a4paper,
total={170mm, 257mm},
left = 20mm,
top = 20mm,
}
\DeclareMathOperator{\C}{\mathbb{C}}

\DeclareMathOperator{\F}{\mathbb{F}}

\DeclareMathOperator{\Q}{\mathbb{Q}}
\DeclareMathOperator{\Z}{\mathbb{Z}}
\DeclareMathOperator{\sF}{\mathcal{F}}

\theoremstyle{plain}
\newcounter{thmsection}

\newtheorem{thm}{Theorem}[thmsection]
\newtheorem{prop}[thm]{Proposition}
\newtheorem{prop-example}[thm]{Proposition-Example}
\newtheorem{cor}[thm]{Corollary}
\newtheorem{lem}[thm]{Lemma}

\theoremstyle{definition}
\newtheorem{defn}[thm]{Definition}
\newtheorem{example}[thm]{Example}
\newtheorem{remark}[thm]{Remark}

\newtheorem{obs}[thm]{Obstruction}

\newcommand{\fakesection}[1]{
  \par\refstepcounter{section}
  \sectionmark{#1}
  \addcontentsline{toc}{section}{\protect\numberline{\Roman{section}}\: #1}
 \begin{center}
 \textsc{\Roman{section} - #1}
 \end{center}
 \setcounter{thm}{0}
\stepcounter{thmsection}
}
\newcommand{\fakesectionnonumber}[1]{
  \par\refstepcounter{section}
  \sectionmark{#1}
  \addcontentsline{toc}{section}{#1}
 \begin{center}
 \textsc{#1}
 \end{center}\normalsize
 \setcounter{section}{1}
 \setcounter{thm}{0}
}

\newcommand\blfootnote[1]{
  \begingroup
  \renewcommand\thefootnote{}\footnote{#1}
  \addtocounter{footnote}{-1}
  \endgroup
}
\begin{document}
\begin{center}
\textbf{REPRESENTATION RINGS OF FUSION SYSTEMS}

\textbf{AND BRAUER CHARACTERS}  
\end{center}
\vspace{0.2cm}
\begin{center}
\textsc{Thomas Lawrence}
\end{center}
\vspace{0.2cm}
\begin{list}{}{\leftmargin = 1.5cm \rightmargin = 1.5cm}
\item{
\small A\textsc{bstract}. Let $\sF$ be a fusion system over a $p$-group $S$. We study the complex character ring $R_{\C}(\sF)$ of $\sF$ by applying techniques from modular character theory to $\sF$-stable characters. We use these techniques to investigate a conjecture posed by Jason Semeraro concerning the volume of $R_{\C}(\sF)$ as a $\Z$-lattice. Proving it holds for all saturated fusion systems would allow for easy verification that a given set of linearly independent $\sF$-stable characters forms a $\Z$-basis of $R_{\C}(\sF)$. We prove that this conjecture holds for all non-exotic fusion systems and a weakened conjecture holds for all fusion systems. We also show that any minimal counter example must be indecomposable by describing the characters of a product of two fusion systems. As a byproduct of our proof method, we describe the modular character rings of $\sF$, provide analogues of the decomposition and Cartan matrices for $\sF$-stable characters, and give a method for decomposing the regular character of $S$ into $\sF$-stable constituents.
\normalsize}
\end{list}
\vspace{0.2cm}
\fakesectionnonumber{Introduction}
\vspace{0.2cm}
\blfootnote
{\scriptsize
\hspace*{-0.4cm}\hspace*{-0.63cm}\textit{Date:} \today \newline
\hspace*{-0.4cm}\textit{Keywords:} Fusion Systems, Modular Character Theory, Volumes of Lattices \newline
\hspace*{-0.4cm}\textit{2010 Mathematics Subject Classification:} 20C20, 20D20, 20C99\newline
\hspace*{-0.4cm}\textit{Acknowledgements:} We thank Jason Semeraro for posing the original problem and numerous valuable discussions and Benjamin Sambale for his helpful comments on a previous version of this preprint.
}
\normalsize
A fusion system is a category designed to capture the local structure of a finite group at a prime. In more detail, a fusion system $\sF$ over a $p$-group $S$ is a category with objects $\text{Ob}(\sF) = \{P \colon P \leq S\}$ and morphism sets $\mathrm{Hom}_{\sF}(P, Q)$ consisting of injective group homomorphisms designed to mimic conjugation. For example, given a finite group $G$ and $S \leq G$ a $p$-subgroup, then the fusion system \emph{realised by $G$}, written $\sF_S(G)$, is the category with $\mathrm{Hom}_{\sF}(P, Q) = \mathrm{Hom}_G(P,Q) := \{c_g|_P \colon P^g \leq Q\}$, where $c_g$ is the conjugation map induced by $g$. The loosest definition of a fusion system is too general for most purposes and it is standard to instead work with \emph{saturated} fusion systems, which are fusion systems satisfying conditions ensuring that they behave like the fusion system $\sF_S(G)$ with $S \in \text{Syl}_p(G)$. When $\sF = \sF_S(G)$ with $S \in \text{Syl}_p(G)$ for some finite group $G$ we say that $\sF$ is \emph{non-exotic}. This terminology correctly indicates that there exist \emph{exotic} fusion systems, which are saturated fusion systems which are not of the form $\sF_S(G)$ for any finite group $G$ and $S \in \text{Syl}_p(G)$.

There has been recent interest (\cite{completion theorem}, \cite{unique factorisation}, \cite{symmetric groups}, \cite{sambale}, among others) in exploring the ring $R_{\C}(\sF)$ of virtual complex $\sF$-stable characters of $S$, which by Proposition 3.11 of \cite{aycin thesis} is the Grothendieck completion of the semiring $R_{\C}^+(\sF)$ of complex $\sF$-stable characters. 

An element $\chi \in R_{\C}^+(\sF)$ is \emph{$\sF$-indecomposable} if it is an atomic element of the additive monoid of $R_{\C}^+(\sF)$. The set $\text{Ind}(\sF)$ of $\sF$-indecomposable characters, unlike the character theory of finite groups, need not be linearly independent; $\sF$-stable characters need not admit a \emph{unique} decomposition into $\sF$-indecomposable constituents (see Example A.2 in  \cite{unique factorisation}). Because of this fact, it is difficult to determine if a given set of $\sF$-indecomposable characters (or any set of virtual $\sF$-stable characters, for that matter) forms a $\Z$-basis of $R_{\C}(\sF)$. It is with this problem in mind that we formulate the main conjecture of this paper: given a $\Z$-basis $B$ of $R_{\C}(\sF)$ we may define the \emph{$\sF$-character table with respect to $B$}: $X_{B}(\sF) := (\chi(x))_{\chi \in B, x^{\sF} \in \text{cl}(\sF)}$. Based on unpublished calculations, Jason Semeraro has conjectured the following:
\newline\newline
\textbf{Conjecture A} (Semeraro)\textbf{.}
Let $\sF$ be a fusion system on a $p$-group $S$ and $B$ be any $\Z$-basis of $R_{\C}(\sF)$. Writing $\text{fccl}(\sF)$ for a set of fully $\sF$-centralised conjugacy class representatives we have
\[|\mathrm{det}(X_B(\sF))|^2 = \displaystyle\prod_{x \in \text{fccl}(\sF)} |C_S(x)|.\]
\newline\newline
This invariant is independent of our choice of basis (c.f. Lemma \ref{basis invariant lemma}) and in fact characterises $\Z$-bases of $R_{\C}(\sF)$ via elementary lattice theory, where this determinant corresponds to the square of the volume of $R_{\C}(\sF)$ (viewed as a $\Z$-lattice embedded in $\C^{|\text{cl}(\sF)|}$). Hence, if this conjecture holds, we can verify if some $B \subseteq R_{\C}(\sF)$ is a basis by computing the determinant of $X_B(\sF)$.

In this paper we show that this conjecture holds in a large variety of examples:
\newline\newline\textbf{Theorem A} (Proposition \ref{transitive prop}, Theorem \ref{conj a for non-exotic}, Corollary \ref{exotic p power det}, Theorem \ref{min counter-example})\textbf{.}
\textit{Conjecture A holds when $\sF$ is transitive or $\sF = \sF_S(G)$ with $S \in \text{Syl}_p(G)$ for some finite group $G$. Additionally, if Conjecture A holds for $\sF_1, \sF_2$ then it holds for $\sF_1 \times \sF_2$.}
\newline\newline
We also prove the following weaker result that holds for all fusion systems, regardless of saturation:\newline
\newline\textbf{Theorem B} (Corollary \ref{exotic p power det})\textbf{.}
\emph{Given any fusion system $\sF$ on a $p$-group $S$, $|\mathrm{det}(X_B(\sF))|^2$ is a power of $p$.}
\newline\newline
To prove these theorems we exploit a symmetry between $\sF$-stable characters for $\sF$ realised by a group $G$ and Brauer characters (characters of modular representations) of $G$.
For some prime $p$ we will write the \emph{$p$-Brauer characters} of $G$ to refer to the characters of $p$-modular representations of $G$. We omit $p$ if the specific prime is unimportant. We refer the reader to Chapter 2 of \cite{navarro} for the necessary background on these objects in the context of modular representation theory. 

In some heuristic sense the $\sF$-stable characters behave as if they were the $p'$-Brauer characters of some group $G$. This connection is made clear when $|G| = p^aq^b$ and $\sF$ is the fusion system realised by $G$ over a Sylow $p$-subgroup $S$: the $\sF$-indecomposable characters are exactly the restrictions to $S$ of irreducible $q$-Brauer characters of $G$ (see Proposition \ref{two prime factors}). 

We may frame these results as a generalisation of the theory of $\pi$-partial characters (see Chapter 3 in \cite{isaacs}) in the case that $\pi$ is every prime dividing $|G|$ except $p$. The theory in \cite{isaacs} only holds for $\pi$-separable groups, but our methods will work for any group. After the release of the original version of this preprint the author was informed by Benjamin Sambale that these constructions are very reminiscent of unpublished work by Olsson in \cite{olsson}.

As part of our proof of Theorem A we describe the ring of $\sF$-stable Brauer characters:\newline\newline
\textbf{Theorem C} (Proposition \ref{mod p prop}, Theorem \ref{mod rank theorem}, Corollary. \ref{b lin indep})\textbf{.}
\emph{Let $\sF$ be any fusion system on a $p$-group $S$, $\ell$ a prime. Let $R_{\overline{\F}_{\ell}}(\sF)$ be the ring of $\sF$-stable $\ell$-Brauer characters and $\mathcal{M}_{\ell}$ the maximal ideal of the algebraic integers $\mathcal{R}$ containing $\ell$. If $B$ is a $\Z$-basis for $R_{\C}(\sF)$, then $\pi_{\ell}(B)$ is a $\F_p$-basis for $R_{\ell}(\sF)$ }
\newline\newline
We use this theorem to prove Theorem A by utilising the isomorphism between representation rings to show that the rows of $X_B(\sF)$ are linearly independent mod $\mathcal{M}_{\ell}$ for all primes $\ell \neq p$.
Combined with the fact that $|\mathrm{det}(X_B(\sF))|^2 \in \Z$ (see Corollary \ref{det 2}), we have that $|\mathrm{det}(X_B(\sF))|^2$ is a power of $p$.
The refinement of this result to the full statement of Conjecture A for non-exotic fusion systems is where the parallels with modular character theory are fully utilised. 

As a byproduct of our proof method, we are able to elaborate on a recent conjecture (Conjecture 2.18 in \cite{symmetric groups}), which asks if all $\sF$-indecomposable characters appear as subcharacters of the regular character $\rho_S$ of $S$. 
This was shown to be false in \cite{sambale}, but we prove that the decomposition of $\rho_S$ with respect to a $\Z$-basis $B$ of $R_{\C}(\sF)$ is still well behaved:\newline\newline
\textbf{Theorem D} (Proposition \ref{regular character})\textbf{.}
\emph{
Let $\rho_S$ denote the regular character of $S$. Let $\sF = \sF_S(G)$ for some $G$ and $B$ a basis for $R_{\C}(\sF)$. For $\chi \in \text{Irr}(G)$ we write the decomposition of $\mathrm{Res}^G_S(\chi)$ over $B$ as $\sum_{\psi \in B} d^{B,G}_{\chi\psi}\psi$ for some $d^{B,G}_{\chi\psi} \in \Z$. 
Let $\Phi^{B,G}_{\psi} := \sum_{\chi \in \text{Irr}(G)}d^{B,G}_{\chi\psi}\chi \in R_{\C}(G)$, then we have
\[\rho_S = \sum_{\psi \in B} \frac{\Phi^{B,G}_{\psi}(1)}{[G:S]}\psi\]
and these coefficients are integers.
}
\setcounter{section}{0}
\fakesection{A brief overview of $\sF$-stable characters} 
\begin{defn}
Let $G$ be a group and $H, K \leq G$. For $g \in G$ we write $c_g$ for the automorphism of $G$ induced by conjugation by $g$. If we let
\[\mathrm{Hom}_G(H, K) := \{\phi \in \mathrm{Hom}(H, K) \colon \phi = c_g|_H \text{ for some } g \in G\},\]
then we may define a \emph{fusion system $\sF$ over a $p$-group $S$} to be a category with $\text{Ob}(\sF) = \{P \colon P \leq S\}$ and morphism sets $\mathrm{Hom}_{\sF}(P, Q)$ consisting of injective group homomorphisms such that 
\[\mathrm{Hom}_S(P,Q) \subseteq \mathrm{Hom}_{\sF}(P,Q) \quad \forall P, Q \leq S.\] 
Furthermore, each $\phi \in \mathrm{Hom}_{\sF}(P, Q)$ decomposes as an isomorphism followed by an inclusion.

 If $\text{Iso}_{\sF}(P, Q) \neq \varnothing$ we say that $P$ and $Q$ are \emph{$\sF$-conjugate}. 
\end{defn}
\begin{defn}\label{generated defn}
Let $\sF$ be a fusion system on $S$. We say \emph{$\sF$ is realised by $G$}, written as $\sF = \sF_S(G)$, if $S \leq G$ and for all $P, Q \leq S$
\[\mathrm{Hom}_{\sF}(P, Q) = \{\phi \colon P \rightarrow Q \colon \exists x \in G \text{ such that } \phi = c_x|_P\}.\]
If there exists a $G$ with $S \in \text{Syl}_p(G)$ such that $\sF$ is realised by $G$, then $\sF$ is \emph{non-exotic}, and exotic if such a $G$ does not exist and $\sF$ is saturated.
\end{defn}

\begin{lem}[Thm. 3 in \cite{realising}]\label{generating group existence}
For any fusion system $\sF$ on $S$, there exists some finite group $G$ such that $\sF$ is realised by $G$.
\end{lem}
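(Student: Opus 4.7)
My plan is a two-step construction that is well known in the fusion system literature: first build an infinite group realising $\sF$ via an iterated HNN extension, then pass to a carefully chosen finite quotient.

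First I would enumerate a finite generating set $\phi_1, \ldots, \phi_n$ for the morphisms of $\sF$, with $\phi_i \in \text{Iso}_{\sF}(P_i, Q_i)$. Finite generation is immediate since $S$ is finite, so $\sF$ has only finitely many morphisms. Then I would take $H$ to be the iterated HNN extension of $S$ along the $\phi_i$, equivalently the fundamental group of the graph of groups with one vertex labelled $S$ and edges $P_i \xrightarrow{\phi_i} Q_i$. By construction, conjugation by the stable letter $t_i$ realises $\phi_i$, so $\sF \subseteq \sF_S(H)$. The reverse inclusion $\sF_S(H) \subseteq \sF$ would follow from the Bass--Serre normal form: any element of $H$ conjugating $x \in P \leq S$ back into $S$ has a reduced form corresponding to a path in the underlying graph of groups, whose associated composite of $\phi_i^{\pm 1}$ is itself already a morphism of $\sF$.

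Second I would produce a finite quotient $G = H/N$ that inherits the fusion. The group $H$ acts on its Bass--Serre tree with finite vertex stabilisers, hence is virtually free and in particular residually finite. For each pair $(x, y) \in S \times S$ with $y \notin x^{\sF}$, residual finiteness supplies a finite-index normal subgroup $N_{x,y} \trianglelefteq H$ whose quotient separates $x$ from $y$; intersecting the finitely many $N_{x,y}$ with an auxiliary finite-index normal subgroup ensuring $N \cap S = \{1\}$ yields the required $N$. The quotient $G := H/N$ is then finite, contains an isomorphic copy of $S$, and by construction carries exactly the fusion $\sF$.

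The main obstacle is the second step: verifying that a single finite quotient simultaneously preserves the embedding of $S$ and avoids creating spurious $\sF_S(G)$-conjugacies beyond those already in $\sF$. This is essentially the content of Theorem 3 in \cite{realising}, which performs the construction carefully, and in a polished write-up I would appeal directly to that reference rather than reproduce the technical details.
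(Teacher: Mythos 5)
The paper's proof is a one-line citation to Theorem~3 of \cite{realising}, so at the level of the final write-up you and the paper agree: cite Park. But the construction you sketch is not what Park does. Park realises a saturated fusion system directly via a characteristic $(S,S)$-biset $\Omega$, taking $G$ to be the group of automorphisms of $\Omega$ as a right $S$-set with $S$ embedded by left translation; there is no infinite intermediary group and no passage to a quotient. The iterated-HNN / graph-of-groups route you describe is the Leary--Stancu construction, and your first step is correct as far as it goes: the Bass--Serre normal form gives $\sF_S(H) = \sF$ with $H$ virtually free.

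Your second step contains a genuine gap, and it is the decisive one. A finite-index $N_{x,y}$ with $xN_{x,y} \neq yN_{x,y}$ -- which is all that ``residual finiteness separates $x$ from $y$'' supplies -- does nothing to stop $x$ and $y$ from becoming \emph{conjugate} in $H/N_{x,y}$; for that you would need conjugacy separability of $H$, a strictly stronger property (it does hold for virtually free groups, so this piece could be repaired with the right citation). But even conjugacy separability only controls the partition of $S$ into element-classes. A fusion system carries more data than this: for subgroups $P, Q \leq S$, a finite quotient can acquire new morphisms in $\text{Hom}_{\sF_S(H/N)}(P,Q)$ that are not in $\sF$ while leaving the element-classes unchanged (compare $\sF_{V_4}(A_4)$ and $\sF_{V_4}(S_4)$, which have identical element fusion but different $\text{Aut}_\sF(V_4)$), and nothing in your choice of $N$ controls this. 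Indeed this is precisely the obstruction that prevented the Leary--Stancu infinite realisation from immediately yielding a finite one and is why Park introduced the biset method. So ``by construction carries exactly the fusion $\sF$'' is the entire difficulty rather than a consequence, and describing Park's Theorem~3 as ``performing the construction carefully'' misattributes to it an argument of a completely different nature.
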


For most purposes the above definition of a fusion system is too vague and it is common to impose the \emph{saturation conditions} and say that a fusion system satisfying them is \emph{saturated}. These conditions are designed so that saturated fusion systems are virtually indistinguishable from the following family of examples: 
\begin{example}[Thm. I.2.3 in \cite{AKO}]
Given a finite group $G$ and $S \in \text{Syl}_p(G)$, the fusion system $\sF_S(G)$ is saturated.
\end{example}
The precise details of these conditions are not important to us and we refer the curious reader to Chapter I.2 of \cite{AKO} for more information. Alperin's fusion theorem tells us that all morphisms in a saturated fusion system arise as compositions of restrictions of $\sF$-automorphisms of \emph{essential subgroups} of $S$, together with $S$ itself. (see Chapter I.3 of \cite{AKO}). If $\sF$ is saturated and has no essential subgroups, we say that $\sF$ is essential rank $0$.

\begin{defn}Given an $s \in S$, the \emph{$\sF$-conjugacy class of $s$} is the set 
\[s^{\sF} := \{s' \in S \colon \exists P, Q \text{ s.t. } \exists \phi \in \mathrm{Hom}_{\sF}(P, Q) \text{ with } \phi(s') = s\}.\]
We will write $\text{cl}(\sF)$ for the set of $\sF$-conjugacy classes. If $x$ satisfies $|C_S(x)| \geq |C_S(y)|$ for all $y \in x^{\sF}$ we say that $x$ is \emph{fully $\sF$-centralised} and write $\text{fccl}(\sF)$ for an arbitrary set of fully $\sF$-centralised $\sF$-conjugacy class representatives.
\end{defn}
\begin{lem}[Lem. I.1.2a in \cite{AKO}]\label{fccl sylow}
If $\sF = \sF_S(G)$ with $S \in \text{Syl}_p(G)$ then $x \in \text{fccl}(\sF) \iff C_S(x) \in \text{Syl}_p(C_G(x))$.
\end{lem}
\begin{proof}
We note that $C_X(x) = C_X(\langle x \rangle)$ for $X \in \{S, G\}$ and apply Lemma I.1.2a of \cite{AKO}.
\end{proof}

We will now begin expositing the required background on $\sF$-stable characters and the ring $R_{\C}(\sF)$.
\begin{defn} 
For a finite group $G$ and a field $k$, we define $R_k^+(G)$ to be the \emph{semiring of characters (of $k$-representations) of $G$}, and $R_{k}(G)$ to be the \emph{ring of (virtual) characters (of $k$-representations) of $G$}, which is the Grothendieck completion of $R_k^+(G)$. We will also write $\text{cf}(G)$ for the set of complex valued class functions on $G$.
\end{defn}

\begin{defn} 
Let $f \in \text{cf}(S)$. Then \emph{$f$ is $\sF$-stable} if for each $P \leq S$ and $\phi \in \mathrm{Hom}_{\sF}(P, S)$, $f|_P = f|_Q \circ \phi$. We write $\text{cf}(\sF)$ for the set of $\sF$-stable class functions on $S$.
\end{defn}

By Lemma 1.3 in \cite{unique factorisation}, these $\sF$-stable class functions are precisely the class functions $f$ such that $f(x) = f(y)$ for all $x \in y^{\sF}$.

Since characters of $S$ are class functions, this definition of $\sF$-stability may be applied. It is clear that $\sF$-stability is preserved under product and addition, so the non-virtual $\sF$-stable characters from a subsemiring of non-virtual characters of $S$.
Given a field $k$, we will denote the semiring of $\sF$-stable characters (of $k$-representations) as $R_k^+(\sF)$.
We call the Grothendieck completion of $R_k^+(\sF)$ the ring of (virtual) $\sF$-stable characters (of $k$-representations) and denote it as $R_k(\sF)$. By Proposition 3.11 of \cite{aycin thesis} $R_{k}(\sF)$ is also the subring of $\sF$-stable virtual characters of $S$.

The following elementary lemma will be useful in the subsequent sections and provides a good family of examples of $\sF$-stable characters:
\begin{lem}\label{restriction}
If $\sF = \sF_S(G)$ for some $G$, then for any $\chi \in R_{k}(G)$, $\mathrm{Res}^G_S(\chi) \in R_{k}(\sF)$.
\end{lem}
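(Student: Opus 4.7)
The plan is to reduce the statement to checking $\sF$-stability on irreducible characters of $G$ and then to unpack the definition of realisation.

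First I would observe that $R(G) = \bZ\text{-span}(\text{Irr}(G))$, so by linearity it suffices to show that for every $\chi \in \text{Irr}(G)$ the restriction $\chi|_S$ lies in $R(\sF)$. Since $\chi$ is afforded by a representation $\rho \colon G \to GL_n(\bC)$, restricting $\rho$ to $S$ gives an honest representation of $S$, so $\chi|_S \in R^+(S) \subseteq R(S)$. In particular $\chi|_S$ is a class function on $S$, which means by the preceding lemma it is enough to verify $\chi|_S(x) = \chi|_S(y)$ whenever $x, y \in S$ are $\sF$-conjugate.

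Next I would invoke the definition of realisation: if $x, y \in S$ are $\sF$-conjugate, there exist $P, Q \leq S$ and $\phi \in \text{Hom}_{\sF}(P, Q)$ with $\phi(x) = y$, and because $\sF = \sF_S(G)$ we have $\phi = c_g|_P$ for some $g \in G$. Hence $y = gxg^{-1}$, so $x$ and $y$ are $G$-conjugate. Since $\chi$ is a class function on $G$, we get $\chi(x) = \chi(y)$, i.e.\ $\chi|_S(x) = \chi|_S(y)$.

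This establishes that $\chi|_S$ is an $\sF$-stable character of $S$, hence lies in $R^+(\sF) \subseteq R(\sF)$, and extending $\bZ$-linearly from $\text{Irr}(G)$ to $R(G)$ completes the proof. There is no real obstacle here; the only point to be careful about is applying the definition of realisation in Definition \ref{generated defn} at the level of elements rather than subgroups, and noting that the previous lemma reduces $\sF$-stability to a condition on $\sF$-conjugacy classes of elements rather than on all morphisms in $\sF$.
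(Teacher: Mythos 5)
Your proof is correct and follows essentially the same route as the paper: note $\chi|_S \in R(S)$, observe that realisation forces $\sF$-conjugacy of elements of $S$ to coincide with $G$-conjugacy, and then apply the constancy-on-$\sF$-classes characterisation of $\sF$-stability. The preliminary reduction to $\text{Irr}(G)$ is harmless but unnecessary, since any $\chi \in R(G)$ is already a $G$-class function and $\chi|_S$ an honest virtual character of $S$.
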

\begin{proof}
Clearly $\text{Res}^G_S(\chi) \in R_{k}(S)$, all that remains is to show that $\mathrm{Res}^G_S(\chi)$ is $\sF$-stable. Because $\sF$ is realised by $G$, $\mathrm{Hom}_{\sF}(P,Q) = \mathrm{Hom}_{G}(P,Q)$ so $s' \in s^{\sF} \iff s' \in s^G$ for all $s, s' \in S$. Since $\chi \in \text{cl}(G)$, we have that for any two $s, s' \in S$ with $s' \in s^G$ then $\mathrm{Res}^G_S(\chi)(s) = \mathrm{Res}^G_S(\chi)(s')$.
\end{proof}
We end by discussing the $\sF$-indecomposables, which act as the irreducible characters for fusion systems.
\begin{defn}\label{ind def}
If $\chi \in R_{\C}^+(\sF)$ cannot be written as a sum of two other elements in $R_{\C}^+(\sF)$ we say that $\chi$ is \emph{$\sF$-indecomposable}. We write the set of $\sF$-indecomposable characters as $\text{Ind}(\sF)$.
\end{defn}
Much like characters of groups, any $\sF$-stable character can be written as a $\Z$-linear combination of indecomposables, and any $\sF$-stable class function may be written as a $\C$-linear combination.
\begin{prop}[Lem. 2.1, Cor 2.2 of \cite{completion theorem}] \label{the rank theorem}
 We have $\langle \text{Ind}(\sF) \rangle_{\Z} = R_{\C}(\sF)$ and $\langle \text{Ind}(\sF) \rangle_{\C} = \text{cf}(\sF)$. Viewed as free abelian group, the rank of $R_{\C}(\sF)$  is $|\text{cl}(\sF)|$. 
\end{prop}

Unfortunately, in contrast to ordinary character theory of groups, each $\sF$-stable character need not factor \emph{uniquely} into a sum of $\sF$-indecomposables, requiring us to instead fix a $\Z$-basis $B$ of $R_{\C}(\sF)$ as opposed to assuming that $\text{Ind}(\sF)$ is linearly independent. We define the \emph{$\sF$-character table with respect to $B$} as the matrix $(X_B(\sF))_{\psi \in B, x \in \text{fccl}(\sF)} = \psi(x)$.

\begin{prop}\label{transitive prop}
If $\sF$ is transitive ($|\text{cl}(\sF)| = 2$), then Conjecture A holds.
\end{prop}
\begin{proof}
By Lemma 2.18 of \cite{unique factorisation} the $\sF$-indecomposables are the trivial character $1_S$ of $S$ and $\rho_S-1_S$, hence $X_{\text{Ind}(\sF)}(\sF)$ is
\[\begin{array}{|c||c|c|} \hline
\text{Ind}(\sF) & 1 & z \\ \hline \hline
1_S & 1 & 1 \\ 
\rho_S-1_S & |S|-1 & -1 \\ \hline
\end{array}\]
As $S$ is a $p$-group we have $|\mathcal{Z}(S)| > 1$ hence a fully $\sF$-centralised representative of the non-trivial $\sF$-conjugacy classes is central in $S$, yielding
\[\prod_{x \in \text{fccl}(\sF)} |C_S(x)| = |C_S(1)||C_S(z)| = S^2 = |\mathrm{det}(X_B(\sF))|^2.\]
\end{proof}

\fakesection{The representation ring of a product of fusion systems.}
In this section we will show that if $\sF$ is a minimal counterexample to Conjecture A, $\sF$ cannot be a product $\sF =\sF_1 \times \sF_2$ of two strictly smaller fusion systems.

\begin{defn}[Thm. I.6.6 in \cite{AKO}]\label{product fusion defn}
Let $\sF_i$ be fusion systems over $p$-groups $S_i$ for $i = 1,2$, then \emph{the product fusion system} $\sF_1 \times \sF_2$ is a fusion system over $S_1 \times S_2$ with
\[\mathrm{Hom}_{\sF_1 \times \sF_2}(P,Q) = \{(\phi_1, \phi_2)|_P \colon \phi_i \in \mathrm{Hom}_{\sF_i}(P_i, Q_i), (\phi_1, \phi_2)(P) \leq Q\}\]
Such that $P, Q \leq S_1 \times S_2$ and $P_i, Q_i$ denoting the projections of $P, Q$ to $S_i$.
\end{defn}

\begin{lem}\label{product classes}
If $(p_1, p_2) \in (q_1, q_2)^{\sF_1 \times \sF_2} \iff p_1 \in {q_1}^{\sF_1}$ and $p_2 \in {q_2}^{\sF_2}$.
\end{lem}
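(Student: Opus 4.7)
The plan is to prove both implications by directly unwinding Definition \ref{product fusion defn} together with the definition of $\sF$-conjugacy of elements. The statement is essentially a bookkeeping lemma, so I do not anticipate any substantive obstacle; the content lies entirely in matching up morphism sets.

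For the forward direction, I would assume $(p_1, p_2) \in (q_1, q_2)^{\sF_1 \times \sF_2}$, so by definition there exist $P, Q \leq S_1 \times S_2$ and some $\psi \in \text{Hom}_{\sF_1 \times \sF_2}(P, Q)$ with $\psi(q_1, q_2) = (p_1, p_2)$. By Definition \ref{product fusion defn}, $\psi$ is the restriction to $P$ of a pair $(\phi_1, \phi_2)$, where $\phi_i \in \text{Hom}_{\sF_i}(P_i, Q_i)$ and $P_i, Q_i$ are the projections of $P, Q$ to $S_i$. Evaluating coordinatewise at $(q_1, q_2)$ gives $\phi_i(q_i) = p_i$, so each $\phi_i$ witnesses $p_i \in q_i^{\sF_i}$.

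For the converse, I would assume $p_i \in q_i^{\sF_i}$ for $i = 1, 2$ and pick witnesses $\phi_i \in \text{Hom}_{\sF_i}(P_i, Q_i)$ with $q_i \in P_i$, $p_i \in Q_i$ and $\phi_i(q_i) = p_i$. Then Definition \ref{product fusion defn} produces a morphism $(\phi_1, \phi_2) \in \text{Hom}_{\sF_1 \times \sF_2}(P_1 \times P_2, Q_1 \times Q_2)$ mapping $(q_1, q_2)$ to $(p_1, p_2)$, which gives $(p_1, p_2) \in (q_1, q_2)^{\sF_1 \times \sF_2}$.

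The only very minor technicality is the defining condition $(\phi_1, \phi_2)(P) \leq Q$ appearing in Definition \ref{product fusion defn}; here it is immediate, since the chosen $P = P_1 \times P_2$ and $Q = Q_1 \times Q_2$ are already full product subgroups and no further restriction of domain is required. Thus the lemma reduces to this pair of direct unwrappings, and the argument is essentially automatic.
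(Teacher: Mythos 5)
Your proof is correct and takes essentially the same route as the paper: unwrap Definition \ref{product fusion defn} to split a product-fusion morphism into its two coordinates for the forward direction, and pair up the two given morphisms for the converse. The only cosmetic difference is that you take a $\psi$ with $\psi(q_1,q_2)=(p_1,p_2)$ whereas the paper's convention for $s^{\sF}$ asks for a map in the other direction; since $\sF$-conjugacy is symmetric this is harmless.
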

\begin{proof} By definition, if there is an $\sF_1\times\sF_2$-isomorphism sending $(p_1, p_2) \mapsto (q_1, q_2)$ we may assume it is of the form $(\phi_1, \phi_2) \in \text{Iso}_{\sF_1}(P_1, Q_1)\times\text{Iso}_{\sF_2}(P_2, Q_2)$ 
where $p_i \in P_i \leq S_i, q_i \in Q_i \leq S_i$. So $p_i \in {q_i}^{\sF_1}$ with $\phi_i$ being a map in $\sF_i$ such that $\phi_i(p_i) = q_i$.

If we now assume $p_i \in {q_i}^{\sF_i}$ with $\phi_i$ being the isomorphism mapping $p_i$ to $q_i$. Define $P_i, Q_i$ as before, 
then $(\phi_1, \phi_2) \in \text{Iso}_{\sF_1 \times \sF_2}((P_1, P_2), (Q_1, Q_2))$, so $(p_1, p_2) \in (q_1, q_2)^{\sF_1 \times \sF_2}$. 
\end{proof}

\begin{lem}\label{product centralised}
Let $\sF_i$ be a fusion system on $S_i$ for $i = 1,2$, then $(x_1,x_2)$ is fully $\sF=\sF_1 \times \sF_2$-centralised if and only if $x_i$ is fully $\sF_i$-centralised.
\end{lem}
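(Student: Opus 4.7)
The plan is to reduce the claim to a statement about centralizer sizes in the direct product and then apply Lemma \ref{product classes}. The key observation is the standard fact that centralizers distribute over direct products: for $(x_1,x_2) \in S_1 \times S_2$ one has $C_{S_1 \times S_2}((x_1,x_2)) = C_{S_1}(x_1) \times C_{S_2}(x_2)$, so that $|C_{S_1 \times S_2}((x_1,x_2))| = |C_{S_1}(x_1)| \cdot |C_{S_2}(x_2)|$. Combined with Lemma \ref{product classes}, which identifies the $\sF$-conjugacy class of $(x_1,x_2)$ with $x_1^{\sF_1} \times x_2^{\sF_2}$, the statement ``$(x_1,x_2)$ is fully $\sF$-centralised'' becomes the inequality
\[|C_{S_1}(x_1)| \cdot |C_{S_2}(x_2)| \geq |C_{S_1}(y_1)| \cdot |C_{S_2}(y_2)| \quad \text{for all } (y_1,y_2) \in x_1^{\sF_1} \times x_2^{\sF_2}.\]

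For the $(\Leftarrow)$ direction, if each $x_i$ is fully $\sF_i$-centralised then $|C_{S_i}(x_i)| \geq |C_{S_i}(y_i)|$ for every $y_i \in x_i^{\sF_i}$, and multiplying the two inequalities gives the required bound.

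For the $(\Rightarrow)$ direction I would argue contrapositively. Suppose without loss of generality that $x_1$ is not fully $\sF_1$-centralised, so there exists $y_1 \in x_1^{\sF_1}$ with $|C_{S_1}(y_1)| > |C_{S_1}(x_1)|$. Since $x_2 \in x_2^{\sF_2}$, Lemma \ref{product classes} yields $(y_1,x_2) \in (x_1,x_2)^{\sF}$, and then
\[|C_S((y_1,x_2))| = |C_{S_1}(y_1)| \cdot |C_{S_2}(x_2)| > |C_{S_1}(x_1)| \cdot |C_{S_2}(x_2)| = |C_S((x_1,x_2))|,\]
contradicting the assumption that $(x_1,x_2)$ is fully $\sF$-centralised. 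There is no real obstacle here beyond invoking Lemma \ref{product classes} correctly; the content is entirely the multiplicativity of centralizer orders under direct products combined with the product description of conjugacy classes.
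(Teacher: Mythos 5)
Your proof is correct and follows the same approach as the paper: the multiplicativity of centralizer orders in a direct product combined with Lemma \ref{product classes}. In fact, your write-up is slightly more careful than the paper's, which only spells out the $(\Leftarrow)$ direction explicitly; your contrapositive argument for $(\Rightarrow)$ fills that in cleanly.
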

\begin{proof}
It is clear that $|C_{S_1 \times S_2}((x_1, x_2))| = |C_{S_1}(x_1) \times C_{S_2}(x_2)|$. So if $x_i$ is fully $\sF_i$-centralised, then 
$|C_{S_1 \times S_2}((x_1, x_2))| =|C_{S_1}(x_1)||C_{S_2}(x_2)| \geq |C_{S_1}(x_1')||C_{S_2}(x_2')| = |C_{S_1 \times S_2}((x_1', x_2'))|$ 
for all $x_i' \in x_i^{\sF}$. Because $(x_1, x_2)^{\sF} = x_1^{\sF_1} \times x_2^{\sF_2}$ by Lemma \ref{product classes}, we have our result.
\end{proof}

We now describe $R_{\C}(\sF_1 \times \sF_2)$:
\begin{lem}\label{product is f-stable}
Let $\sF_1 \times \sF_2$ be a product fusion system on $S_1 \times S_2$ and $k$ a field, then $\chi_i \in R_{k}(\sF_i)$, then $\chi_1\chi_2 \in R_{k}(\sF_1 \times \sF_2)$.
\end{lem}
\begin{proof}
We first assume $\chi_i$ are $\sF_i$-stable. If $(p_1, p_2) \in (q_1, q_2)^{\sF_1\times\sF_2}$ then $p_1 \in q_1^{\sF_1}$ and $p_2 \in q_2^{\sF_2}$. 
Because  $\chi_1, \chi_2$ are $\sF_1$-stable and $\sF_2$-stable respectively, we have $\chi(p_1, p_2) = \chi_1(p_1)\chi_2(p_2) = \chi_1(q_1)\chi_2(q_2) = \chi(q_1, q_2)$. Thus $\chi$ is $\sF_1\times\sF_2$-stable. 
\end{proof}

\begin{thm}\label{min counter-example}
For any fusion systems $\sF_1, \sF_2$ over $p$-groups $S_1, S_2$ then, as commutative monoids and abelian groups respectively
\begin{align*}R_{\C}^+(\sF_1 \times \sF_2) &\cong R_{\C}^+(\sF_1) \otimes_{\mathbb{N}} R_{\C}^+(\sF_2) \\
R_{\C}(\sF_1 \times \sF_2) &\cong R_{\C}(\sF_1) \otimes_{\Z} R_{\C}(\sF_2).
\end{align*}
Furthermore, if Conjecture A holds for $\sF_1$ and $\sF_2$, it holds for $\sF_1 \times \sF_2$.
\end{thm}
\begin{proof}
By Lemma \ref{product classes} $|\text{cl}(\sF_1 \times \sF_2)| = |\text{cl}(\sF_1)| \times |\text{cl}(\sF_2)|$ and so the claimed isomorphisms exist by Theorem \ref{the rank theorem} and comparing ranks. As all representation rings involved are free, it follows from Proposition 1.10.1 of \cite{martinet} that the volume of $R_{\C}(\sF_1 \times \sF_2)$ as a $\Z$-lattice is $\text{Vol}(R_{\C}(\sF_1))^{|\text{cl}(\sF_2)|}\text{Vol}(R_{\C}(\sF_2))^{|\text{cl}(\sF_1)|}$. If $B$ is a basis for $\sF_1 \times \sF_2$ then by our assumption that  $\sF_1$ and $\sF_2$ satisfy Conjecture A we have
\begin{align*}|\mathrm{det}(X_{B}(\sF))|^2 &= \left(\prod_{x \in \text{fccl}(\sF_1)} |C_{S_1}(x)|\right)^{|\text{cl}(\sF_1)|}\left(\prod_{y \in \text{fccl}(\sF_2)}(|C_{S_2}(y)|\right)^{|\text{cl}(\sF_2)|} 
\\ &= \prod_{x \in \text{cl}(\sF_1)}\prod_{y \in \text{cl}(\sF_2)}|C_{S_1}(x)||C_{S_2}(y)| \\ &= \prod_{x \in \text{fccl}(\sF_1)}\prod_{y \in \text{fccl}(\sF_2)}|C_{S_1 \times S_2}(x, y))|.
\end{align*}
Applying Lemma \ref{product centralised} yields
\[\prod_{x \in \text{fccl}(\sF_1)}\prod_{y \in \text{fccl}(\sF_2)}|C_{S_1 \times S_2}((x, y))| = \prod_{z \in \text{fccl}(\sF_1 \times \sF_2)}|C_{S_1 \times S_2}(z)|\]
and the result is shown.
\end{proof}
\begin{cor}
If $\sF := \sF_1 \times \sF_2$ and $B_i$ is a basis for $R_{\C}(\sF_i)$, then $B :=  \{\psi\mu \colon \psi \in B_1, \mu \in B_2\}$ is a basis for $R_{\C}(\sF)$ and $X_{B}(\sF) = X_{B_1}(\sF_1) \otimes X_{B_2}(\sF_2)$, where we use $\otimes$ to denote the Kronecker product of matrices.
\end{cor}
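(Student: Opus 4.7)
The plan is to verify the two claims in sequence: first that $B_1B_2$ is indeed a basis of $R(\sF)$, then that the character table factors as a Kronecker product.

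For the basis claim, I would first check that $B_1B_2$ spans $R(\sF)$ over $\bZ$. By Theorem \ref{product basis}, every $\chi \in R(\sF_1 \times \sF_2)$ is of the form $\chi_1\chi_2$ with $\chi_i \in R(\sF_i)$. Writing $\chi_i = \sum_{\psi_i \in B_i} a_{\psi_i}\psi_i$ with $a_{\psi_i} \in \bZ$ and distributing gives $\chi = \sum_{(\psi, \mu) \in B_1\times B_2} a_\psi a_\mu \, \psi\mu$, so $B_1B_2$ generates $R(\sF)$ as a $\bZ$-module.

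Next I would establish linear independence by a rank count. By Lemma \ref{product classes} we have $|\text{cl}(\sF_1 \times \sF_2)| = |\text{cl}(\sF_1)| \cdot |\text{cl}(\sF_2)|$, so Theorem \ref{the rank theorem} gives $\text{rk}_{\bZ}(R(\sF)) = |B_1|\cdot |B_2|$. The spanning result furnishes a surjection $\bZ^{B_1 \times B_2} \twoheadrightarrow R(\sF)$, $e_{(\psi,\mu)} \mapsto \psi\mu$, between free abelian groups of the same finite rank, which must therefore be an isomorphism. Consequently the products $\psi\mu$ for $(\psi,\mu) \in B_1 \times B_2$ are pairwise distinct and $\bZ$-linearly independent, so $B_1B_2$ is a $\bZ$-basis of size $|B_1||B_2|$.

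For the character table identity, I would identify $\text{cl}(\sF)$ with $\text{cl}(\sF_1) \times \text{cl}(\sF_2)$ via Lemma \ref{product classes}, and choose class representatives of the form $(x_{K_1}, x_{K_2})$ for each $(K_1, K_2)$. Fixing any orderings on $B_i$ and $\text{cl}(\sF_i)$ and using the induced lexicographic orderings on $B_1 \times B_2$ and $\text{cl}(\sF_1) \times \text{cl}(\sF_2)$, a direct computation gives
\[(X_{B_1B_2}(\sF))_{(\psi,\mu),(K_1,K_2)} = (\psi\mu)(x_{K_1}, x_{K_2}) = \psi(x_{K_1})\,\mu(x_{K_2}),\]
which is precisely the $((\psi,\mu),(K_1,K_2))$ entry of the Kronecker product $X_{B_1}(\sF_1) \otimes X_{B_2}(\sF_2)$.

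There is no real obstacle here: the spanning step is formal from Theorem \ref{product basis}, and the Kronecker identity is a tautology once representatives are chosen compatibly. The only subtlety worth flagging is the linear independence argument, which relies on the general fact that a surjection between free abelian groups of equal finite rank is automatically injective; this is what rules out hidden coincidences among the products $\psi\mu$.
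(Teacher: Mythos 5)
Your proof is correct, and it takes a genuinely different route from the paper in establishing linear independence. The paper's argument is: show $B_1B_2$ spans (as you do), then compute the matrix identity $X_{B_1B_2}(\sF) = X_{B_1}(\sF_1) \otimes X_{B_2}(\sF_2)$, and then observe that since $X_{B_1}(\sF_1)$ and $X_{B_2}(\sF_2)$ have full rank (Lemma \ref{orthogonality}) and the Kronecker product of full-rank matrices has full rank, the rows of $X_{B_1B_2}(\sF)$ are independent, hence $B_1B_2$ is a basis. You instead count: Lemma \ref{product classes} gives $|\text{cl}(\sF_1 \times \sF_2)| = |\text{cl}(\sF_1)|\cdot|\text{cl}(\sF_2)|$, so by Theorem \ref{the rank theorem} the free abelian group $R(\sF)$ has rank $|B_1||B_2|$, and a surjection $\bZ^{B_1 \times B_2} \twoheadrightarrow R(\sF)$ between free abelian groups of equal finite rank must be an isomorphism. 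The paper's route has the virtue of making the matrix identity do double duty (it is both the second claim and the key to the first), while your rank count is more elementary, cleanly decouples the two claims, and avoids any appeal to properties of the Kronecker product for the basis statement. Both are valid, and your flagged subtlety -- that a surjection of free abelian groups of the same rank is automatically injective -- is exactly the right thing to highlight.
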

\begin{proof}
Writing $S(M)$ for the set of eigenvalues of $M$ with multiplicity, then $S(A \otimes B) = \{ab \colon a \in S(A), b \in S(B)\}$, hence
\[\text{det}(A \otimes B) = \text{det}(A)^{\text{rk}(B)}\text{det}(B)^{\text{rk}(A)}.\]
It then follows that $\text{det}(X_B(\sF)) = \text{det}(X_{B_1}(\sF_1))^{|\text{cl}(\sF_2)|}\text{det}(X_{B_2}(\sF_2))^{|\text{cl}(\sF_1)|}$. It then follows Proposition 1.10.1 of \cite{martinet} that $B$ is a basis for $R_{\C}(\sF)$.
\end{proof}
\fakesection{An Inverted Brauer Character Theory}
In  unpublished notes \cite{olsson} Olsson generalised $p$-Brauer characters to $\pi$-Brauer characters for a \emph{set} of primes $\pi$ which are class functions on the conjugacy classes of $\pi'$-elements of $G$. We will investigate the connection between fusion systems over a $p$-group and $p'$-Brauer characters (the case where $\pi = \{q \text{ prime } \colon q \neq p\})$, providing a number of proofs for this specific case not given in \cite{olsson}. We refer the reader to Chapter 2 of \cite{navarro} for all of the required background on Brauer character theory.

\begin{lem}[Thm. 2.12 in \cite{navarro}, Thm. 43.ii in \cite{serre}] \label{brauer lifting}
Let $q$ be a prime and $G$ be a finite group. Then there is a ring homomorphism $L \colon R_{\overline{\F}_q}(G) \rightarrow R_{\C}(G)$ sending $\psi \in R_{\overline{\F}_q}(G)$ to the class function $L(\psi) \colon \text{cl}(G) \rightarrow \C$ such that $L(\psi)(g) = \psi(g_{q'})$, where $g_{q'}$ is the $q'$-part of $g \in G$. Moreover, if $(|G|, q) = 1$ then this map is an isomorphism sending $\text{IBr}_q(G) \rightarrow \text{Irr}(G)$.
\end{lem} 
We refer $L$ as the \emph{Brauer lifting} map and $L(\psi)$ as the \emph{Brauer lift} of $\psi$. The theory described in this section was originally inspired by the following observation:
\begin{prop}\label{two prime factors}
Let $|G| = p^aq^b$, $S \in \mathrm{Syl}_p(G)$ and $\sF = \sF_S(G)$. Write $\text{IBr}_q(G)$ for the set of irreducible $q$-Brauer characters of $G$. Then writing $\text{IBr}_q(G)|_S := \{\mathrm{Res}_S^G(\psi) \colon \psi \in \text{IBr}_q(G)\}$, $\text{Ind}(\sF) = L(\text{IBr}_q(G)|_S)$.
\end{prop}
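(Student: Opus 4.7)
The plan is to show that $B := \{\psi|_S : \psi \in \text{IBr}_q(G)\}$ is a free $\bZ_{\geq 0}$-basis of the monoid $R^+(\sF)$, which immediately forces $\text{Ind}(\sF) = B$. I would proceed in three stages: produce the candidates, count them and check linear independence, and finally decompose every $\sF$-stable character non-negatively over $B$.

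\emph{Setup and count.} Burnside's $p^aq^b$ theorem gives that $G$ is solvable, hence $q$-solvable, so the Fong--Swan theorem furnishes for each $\psi\in\text{IBr}_q(G)$ an ordinary lift $\hat\psi\in\text{Irr}(G)$ with $\hat\psi(g)=\psi(g)$ for every $q$-regular $g\in G$. Every $s\in S$ has $p$-power order and is therefore $q$-regular, so $\hat\psi|_S = \psi|_S$, which is thus an honest character of $S$ (the Brauer lift of $\psi|_S$ coincides with it); Lemma \ref{restriction} then places it in $R^+(\sF)$. For the count, a $q$-regular element of $G$ has order dividing $p^aq^b$ and coprime to $q$, hence $p$-power order, so by Sylow is $G$-conjugate into $S$; since two elements of $S$ are $G$-conjugate if and only if they are $\sF$-conjugate, $|\text{cl}_q(G)|=|\text{cl}(\sF)|$, giving $|B|=|\text{IBr}_q(G)|=|\text{cl}(\sF)|=\text{rk}_{\bZ}R(\sF)$ by Theorem \ref{the rank theorem}. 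A relation $\sum c_\psi\psi|_S = 0$ evaluated on a system of $q$-regular class representatives (all lying in $S$) collapses to $\sum c_\psi\psi=0$ on $G_{q'}$, forcing each $c_\psi=0$ by linear independence of irreducible Brauer characters; thus $B$ is $\bC$-linearly independent and forms a $\bQ$-basis of $R(\sF)\otimes\bQ$.

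\emph{The main obstacle.} I expect the crux to be showing every $\chi\in R^+(\sF)$ is a \emph{non-negative} integer combination of $B$. My plan is to establish the stronger identity $R^+(\sF) = R^+(G)|_S$. The inclusion $\supseteq$ is Lemma \ref{restriction}; granting the reverse inclusion, for any $\chi\in R^+(\sF)$ pick $\chi'\in R^+(G)$ with $\chi'|_S=\chi$ and use the decomposition matrix identity $\chi'|_{G_{q'}}=\sum_\psi d_{\chi'\psi}\psi$ (with $d_{\chi'\psi}\in\bZ_{\geq 0}$) to obtain $\chi=\sum_\psi d_{\chi'\psi}\psi|_S$. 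The hard direction $R^+(\sF)\subseteq R^+(G)|_S$ asks that every $\sF$-stable character of $S$ lifts to an ordinary character of $G$: since $p\neq q$, $\chi$ corresponds uniquely to a semisimple $\overline{\bF}_q[S]$-module whose character is $G$-conjugation invariant, and I would invoke Clifford theory for the $q$-solvable group $G$ to extend this module to an $\overline{\bF}_q[G]$-module, then apply Fong--Swan a second time to lift to characteristic zero. An alternative avoiding module extension is to push $\chi$ to a class function $\tilde\chi$ on $G_{q'}$ via $\sF$-stability (well-defined since every $q$-regular class of $G$ meets $S$) and compute the unique decomposition $\tilde\chi=\sum c_\psi\psi$ via $c_\psi=\langle\Phi_\psi,\tilde\chi\rangle_G$ against the projective indecomposable characters, deducing integrality and positivity of the $c_\psi$ from the structure of the decomposition matrix in the $q$-solvable setting.

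\emph{Conclusion.} Once the non-negative expansion is established, $\sF$-indecomposability of each $\psi|_S$ is immediate: a splitting $\psi|_S=\eta_1+\eta_2$ in $R^+(\sF)$ expands each $\eta_i$ non-negatively over $B$, and linear independence forces their joint coefficients to equal the indicator of $\psi$, so one of $\eta_1,\eta_2$ is zero. Conversely, any element of $\text{Ind}(\sF)\subseteq R^+(\sF)\subseteq\bZ_{\geq 0}\cdot B$ must equal a single basis element $\psi|_S$. Hence $\text{Ind}(\sF)=B=\{\psi|_S:\psi\in\text{IBr}_q(G)\}$, which is the stated Brauer-lift description.
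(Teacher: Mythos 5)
Your decomposition of the problem into three stages is sensible, and the first two stages are solid: Burnside forces $G$ solvable so Fong--Swan applies, each $\psi|_S$ for $\psi\in\mathrm{IBr}_q(G)$ is an honest $\sF$-stable character of $S$, and your count plus linear-independence argument (via the bijection $\mathrm{cl}_{q'}(G)\leftrightarrow\mathrm{cl}(\sF)$) correctly shows $B$ is a $\bQ$-basis of $R(\sF)\otimes\bQ$ of the right size. This matches the easy half of the paper's argument.

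The gap is exactly where you flag it, in stage three, and neither of your two proposed fills actually closes it. The stronger intermediate claim $R^+(\sF)\subseteq R^+(G)|_S$ is not something you can get from Clifford theory as stated: Clifford theory (in the module-extension sense you invoke) is about extending modules from a \emph{normal} subgroup to the ambient group, and $S\in\mathrm{Syl}_p(G)$ is not normal in $G$ in general, so there is no inertia group or cocycle obstruction to speak of for this configuration. Your alternative route --- push $\chi$ to $\tilde\chi\in\mathrm{cf}_{q'}(G)$ and read off $c_\psi=\langle\Phi_\psi,\tilde\chi\rangle_G$ --- certainly produces the unique coefficients, but it gives no mechanism for showing they are integers, let alone non-negative: the projective indecomposable characters $\Phi_\psi$ take mixed signs and arbitrary algebraic values, and $\tilde\chi$ is merely a class function on $G_{q'}$ at this point, not a known Brauer character, so nothing in the decomposition-matrix machinery applies to it directly. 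Unless and until one proves that $\tilde\chi$ \emph{is} a $q$-Brauer character of $G$, the positivity of the $c_\psi$ is exactly the open step, and invoking the decomposition matrix presupposes what is to be shown. (As a sanity check that the issue is genuine: the identity $R^+(\sF)=R^+(G)|_S$ fails without the $p^aq^b$ hypothesis --- e.g.\ $G=A_5$, $S=C_5$, where $\omega+\omega^{-1}\in R^+(\sF)\setminus R^+(G)|_S$ --- so whatever argument you use must genuinely exploit solvability, it cannot be a soft general fact.) The paper's own proof compresses this same step into a single assertion, so the two arguments confront the same bottleneck; the difference is that you aim for the even stronger realizability statement $R^+(\sF)=R^+(G)|_S$, which makes the burden heavier rather than lighter. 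To complete the argument you would want to explicitly deploy the theory of $\pi$-partial characters of $\pi$-separable groups (Isaacs, Ch.\ 3) with $\pi=\{q\}$, or equivalently Fong--Swan together with a result guaranteeing that a $G$-stable semisimple $\overline{\bF}_q[S]$-module for a Hall $p$-subgroup $S$ of a solvable $G$ is the restriction of a semisimple $\overline{\bF}_q[G]$-module; neither of these is what ``Clifford theory'' delivers off the shelf.
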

\begin{proof}
Note that $\text{cl}_{q'}(G) = \text{cl}_{p}(G)$ and $\text{cl}(\sF) = \{K \cap S \colon K \in \text{cl}_{p}(G)\} $. Because $S$ is a $q'$-group, by Lemma \ref{brauer lifting} the Brauer lift $L \colon R_{\overline{\F}_q}(S) \rightarrow R_{\C}(S)$ is an isomorphism, thus $L(\mathrm{Res}_S^G(\psi)) = \mathrm{Res}_S^G(L(\psi)) \in R_{\C}(S)$ for $\psi \in \text{IBr}_q(G)$. Furthermore $\psi$ is invariant on $\text{cl}_{q'}(G) = \text{cl}_p(G)$ so we have $L(\mathrm{Res}_S^G(\psi)) \in R_{\C}(\sF)$.

Let $\chi \in \text{Ind}(\sF)$. We note that $\chi = \text{Res}^G_S(\psi)$ for some non-virtual $q$-Brauer character $\psi$ of $G$, and by Lemma \ref{brauer lifting} $L$ restricts to a bijection $\text{IBr}_q(S) \rightarrow \mathrm{Irr}(S)$ and that lifts of non-virtual $q$-Brauer characters of $S$ are non-virtual characters.

Suppose we can write $\chi = L(\mathrm{Res}^G_S(\psi_1+\psi_2)) = L(\mathrm{Res}_S^G(\psi_1))+L(\mathrm{Res}_S^G(\psi_2))$ for two $\psi_1, \psi_2 \in \text{IBr}_q(G)$. This immediately contradicts the $\sF$-indecomposability of $\chi$ as $L(\mathrm{Res}_S^G(\psi_i)) \in R^+_{\C}(\sF)$ for $i = 1,2$ by the previous paragraph. Thus $\chi = L(\mathrm{Res}_S^G(\psi))$ for some $\psi \in \text{IBr}_q(G)$ and by injectivity of $L$ we are done.
\end{proof}
This result motivates the  following definitions:
\begin{defn}\label{decomp matrix defn}
Let $\sF$ be a fusion system over $S$, $B$ a basis of $R_{\C}(\sF)$ and $G$ a finite group realising $\sF$. For each  $\chi \in \text{Irr}(G)$ we have a decomposition
\[\mathrm{Res}^G_S(\chi) = \sum_{\psi \in B} d_{\chi\psi}^{B,G}\psi\]
and we refer to the coefficients $d_{\chi\psi}^{B,G}$ as the \emph{decomposition numbers}. We collect these coefficients in the \emph{decomposition matrix of $\sF$ (with respect to $B$ and $G$)}: 
\[(D_{B,G}(\sF))_{\chi \in \mathrm{Irr}(G), \psi \in B} := d_{\chi\psi}.\]
We also define \emph{Cartan matrix of $\sF$ (with respect to $B$ and $G$)} to be
\[C_{B, G}(\sF) := D_{B, G}(\sF)^TD_{B, G}(\sF).\]
The Cartan matrix is indexed by $\psi, \mu \in B$. 
We define the \emph{Cartan numbers} $c^{B,G}_{\psi\mu} := C_{B,G}(\sF)_{\psi\mu}$.
\end{defn}
The following definition mirrors the definition of the projective indecomposable characters from Brauer character theory, which are the characters of the projective covers of the simple representation corresponding to an irreducible Brauer character (c.f. Section 18.3 of \cite{serre}).
\begin{defn}
For $\psi \in B$, we define the associated \emph{orthogonal indecomposable} to be \[\displaystyle\Phi^{B,G}_{\psi} := \sum_{\chi \in \mathrm{Irr}(G)} d^{B,G}_{\chi\psi}\chi.\] 
We set $(P_{B,G})_{\psi \in B, x^{\sF} \in \text{cl}(\sF)} := \Phi^{B,G}_{\psi}(x)$ and write $\mathrm{Orth}_{G, B}(\sF) := \{\Phi^{B,G}_{\psi} \colon \psi \in B\}$.
\end{defn}

For the purpose of investigating Conjecture A we show that we may fix an arbitrary choice for B:
\begin{lem}[Page 10 of \cite{geometry of numbers}] \label{basis invariant lemma} 
For any two $\Z$-bases $B,B'$ of a free $\Z$-module $V$ with associated matrices $X_B, X_{B'}$, then $|\mathrm{det}(X_B)| = |\mathrm{det}(X_B')|$.
\end{lem}
\begin{proof}
Given two bases $B, B'$ of $V$, there is a change of basis matrix $M \in GL_{|B|}(\Z)$ such that $MX_B = X_{B'}$, hence $\mathrm{det}(M)\mathrm{det}(X_B) = \pm\mathrm{det}(X_{B}) = \mathrm{det}(X_{B'})$ and therefore $|\mathrm{det}(X_B)|^2 = |\mathrm{det}(X_{B'})|^2$.
\end{proof}
We will now fix both $B$ and $G$ and omit them from our notation. We prove some elementary but critical properties of the orthogonal indecomposables.
\begin{lem}
If $\sF = \sF_S(G)$, then $\text{Orth}(\sF) \subset R_{\C}(G)$.
\end{lem}
\begin{proof}
Since $\mathrm{Res}_S^G(\chi) \in R_{\C}^+(\sF)$ for all $\chi \in \mathrm{Irr}(G)$ we have that the decomposition numbers are integers, hence $\Phi_{\psi} \in R_{\C}(G)$. 
\end{proof}
\begin{lem}[Lem. 6.2 in \cite{olsson}]
\label{orthogonality}
Let $X(\sF)$ be the character table of $\sF$ (with respect to $B$) and $\Delta :=  \text{Diag}_{x^{\sF} \in \text{cl}(\sF)}(|C_G(x)|)$. Then $\overline{P^T}X(\sF) = \Delta$ and $X(\sF)$ and $P$ are both of full rank.
\end{lem}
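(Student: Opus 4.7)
The plan is to expand the matrix product $\overline{P^T} X(\sF)$ entry-by-entry and reduce the calculation to standard column orthogonality in the ordinary character table of $G$. The $(K,L)$-entry is
\[(\overline{P^T}X(\sF))_{K,L} = \sum_{\psi \in B} \overline{\Phi_\psi(x_K)}\,\psi(x_L).\]
Substituting the definition $\Phi_\psi = \sum_{\chi \in \text{Irr}(G)} d_{\chi\psi}\chi$, using that the decomposition numbers $d_{\chi\psi}$ are integers (hence untouched by conjugation), and swapping the order of summation, this becomes
\[\sum_{\chi \in \text{Irr}(G)} \overline{\chi(x_K)} \sum_{\psi \in B} d_{\chi\psi}\,\psi(x_L).\]
The inner sum is the value at $x_L$ of the $B$-decomposition $\chi|_S = \sum_{\psi \in B} d_{\chi\psi}\psi$, so it equals $\chi(x_L)$.

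The next step is to apply column orthogonality for the character table of $G$ to $\sum_{\chi \in \text{Irr}(G)} \overline{\chi(x_K)}\chi(x_L)$; this gives $|C_G(x_K)|$ when $x_K$ and $x_L$ are $G$-conjugate and $0$ otherwise. The remaining point to observe is that on elements of $S$, $G$-conjugacy coincides with $\sF$-conjugacy: every $\sF$-morphism is induced by conjugation by some $g \in G$ because $\sF = \sF_S(G)$, and conversely conjugation by $g \in G$ sending $x_K$ to $x_L$ (both in $S$) restricts to an $\sF$-isomorphism $\langle x_K \rangle \to \langle x_L \rangle$. Since $x_K, x_L$ are representatives of distinct $\sF$-conjugacy classes when $K \neq L$, the Kronecker delta collapses to $\delta_{K,L}$, giving $\overline{P^T} X(\sF) = \Delta$.

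For the full-rank statement I will invoke Theorem \ref{the rank theorem}, which gives $|B| = |\text{cl}(\sF)|$, so that both $P$ and $X(\sF)$ are square of this common size. The diagonal matrix $\Delta$ has all nonzero entries, so $\det(\Delta) \neq 0$, and from $\det(\overline{P^T})\det(X(\sF)) = \det(\Delta)$ both determinants must be nonzero, hence both matrices have full rank. The proof is essentially a bookkeeping exercise around the orthogonality relations for $G$; the one point that requires care is confirming that the inner sum genuinely reassembles $\chi|_S$, which rests on reading $d_{\chi\psi}$ as the coefficient of $\psi$ in the $B$-decomposition of $\chi|_S$ (as framed in the introductory Proposition) rather than an inner product that might not give back the original character.
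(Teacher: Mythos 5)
Your proof follows essentially the same route as the paper: expand $\overline{P^T}X(\sF)$ entrywise, substitute the definition of $\Phi_\psi$, swap summation to reassemble $\chi|_S(x_L)$, apply column orthogonality in $\text{Irr}(G)$, and note that $G$-conjugacy and $\sF$-conjugacy coincide on $S$ since $\sF=\sF_S(G)$. The only cosmetic difference is in the full-rank step: you pass through $\det(\overline{P^T})\det(X(\sF))=\det(\Delta)\neq 0$, while the paper uses the rank inequality $|\text{cl}(\sF)|=\text{rk}(\Delta)\leq\min(\text{rk}(P),\text{rk}(X(\sF)))\leq|\text{cl}(\sF)|$; both rest on Theorem \ref{the rank theorem} to know $P$ and $X(\sF)$ are square. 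Your closing caveat is a good catch: the identity $\sum_{\psi\in B}d_{\chi\psi}\psi=\chi|_S$ is exactly what the proof needs, and it is what the $d_{\chi\psi}$ must mean — the $B$-decomposition coefficients, as used in Proposition \ref{regular char decomp} — rather than the literal inner product $\langle\chi|_S,\psi\rangle$ of Definition \ref{decomp matrix defn}, since a $\bZ$-basis of $R(\sF)$ need not be orthonormal for the standard form on $\text{cf}(S)$; that reading should be taken as the intended one throughout.
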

\begin{proof}
Let $g \in G$, $s \in S$, then:
\begin{align*}
\sum_{\psi \in B} \overline{\Phi_{\psi}(g)}\psi(s) &= \sum_{\psi \in B}\sum_{\chi \in \mathrm{Irr}(G)} d_{\chi\psi}\overline{\chi(g)}\psi(s) = \sum_{\chi \in \mathrm{Irr}(G)} \overline{\chi(g)}\sum_{\psi \in B}d_{\chi\psi}\overline{\psi(s)} = \sum_{\chi \in \mathrm{Irr}(G)} \overline{\chi(g)}\chi(s) \\ &= \delta_{g^{G}s^{G}}|C_G(g)|
\end{align*}
by column orthogonality (c.f. Theorem 16.4 of \cite{james liebeck}). Writing out $\overline{P^T}Q$ gives us:
\begin{align*}
(\overline{P^T}X(\sF))_{x^{\sF}, y^{\sF} \in \text{cl}(\sF)} &= \sum_{\psi \in B}\overline{\Phi_{\psi}(x)}\psi(y) = \delta_{x^{\sF},y^{\sF}}|C_G(x)| \\
&\Rightarrow \overline{P^T}X(\sF) = \text{Diag}_{x^{\sF} \in \text{cl}(\sF)}(|C_G(x)|) = \Delta.
\end{align*}
Since $|C_G(x)| \neq 0$ for any $x$, $\Delta$ is of full rank.  
We know that $P$ is square as $|B| = |\text{cl}(\sF)|$ by Proposition \ref{the rank theorem}, therefore 
\[|\text{cl}(\sF)| = \text{rk}(\Delta) \leq \text{min}(\text{rk}(P), \text{rk}(X(\sF))) \leq |\text{cl}(\sF)| \Rightarrow \text{rk}(P) = \text{rk}(X(\sF)) = |\text{cl}(\sF)|.\] 
and both results are proven.
\end{proof}

\begin{lem}\label{zcf}
$\Phi_{\psi}(g) = 0$ whenever $g$ is not a $p$-element of $G$.
\end{lem}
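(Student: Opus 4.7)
The plan is to identify $\Phi_{\psi}$ with the induced (virtual) character $\text{Ind}_S^G \psi$. By Frobenius reciprocity,
\[
d_{\chi\psi} = \langle \chi|_S, \psi\rangle_S = \langle \chi, \text{Ind}_S^G \psi\rangle_G,
\]
which extends by bilinearity to the virtual character $\psi \in R(\sF) \subseteq R(S)$. Since $\text{Ind}_S^G \psi$ is a virtual character of $G$, expanding it over $\text{Irr}(G)$ gives
\[
\text{Ind}_S^G \psi = \sum_{\chi \in \text{Irr}(G)} \langle \chi, \text{Ind}_S^G \psi\rangle_G \,\chi = \sum_{\chi \in \text{Irr}(G)} d_{\chi\psi}\chi = \Phi_\psi.
\]

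With this identification in hand, the lemma follows from the explicit formula for induced characters:
\[
\Phi_\psi(g) = (\text{Ind}_S^G \psi)(g) = \frac{1}{|S|}\sum_{\substack{x \in G \\ x^{-1}gx \in S}} \psi(x^{-1}gx).
\]
If $g$ is not a $p$-element of $G$, then no $G$-conjugate of $g$ has $p$-power order, and so no $G$-conjugate of $g$ lies in the $p$-group $S$. The indexing set is therefore empty, and $\Phi_\psi(g) = 0$.

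There is no real obstacle here; the only subtlety is remembering that $\psi$ may be virtual, which is handled automatically since both Frobenius reciprocity and the induced-character formula are linear. As a bonus, the identification $\Phi_\psi = \text{Ind}_S^G \psi$ is likely to yield equally clean proofs of the parallel statement from modular character theory that $[G:S]$ divides $\Phi_\psi(1)$, since $\Phi_\psi(1) = (\text{Ind}_S^G \psi)(1) = [G:S]\,\psi(1)$.
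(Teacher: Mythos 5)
Your identification $\Phi_\psi = \text{Ind}_S^G\psi$ rests on reading the definition $d_{\chi\psi} = \langle\chi|_S,\psi\rangle$ as the literal inner product of class functions on $S$. But everywhere else the paper uses $d_{\chi\psi}$ as the \emph{coordinate} of $\chi|_S$ in the basis $B$, that is, $\chi|_S = \sum_{\psi\in B}d_{\chi\psi}\psi$ (see the introduction's statement of Proposition D, the last displayed step in the proof of Lemma \ref{orthogonality}, and the proof of Corollary \ref{proj restriction}). The two notions coincide only when $B$ is orthonormal in $R(S)$, which is the exception rather than the rule: a basis of $R(\sF)$ has only $|\text{cl}(\sF)|$ elements and its members are usually reducible characters of $S$. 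Concretely, take $G = A_4$, $p = 2$, $S \cong V_4$ and $B = \{1_S,\,\alpha+\beta+\gamma\}$, where $\alpha,\beta,\gamma$ are the three nontrivial linear characters of $S$. With $\psi = \alpha+\beta+\gamma$ one checks $\Phi_\psi$ is the unique $3$-dimensional irreducible of $A_4$ (degree $3$), while $\text{Ind}_S^G\psi$ has degree $9$; under the paper's working conventions the identification, and hence the proof, fails, and the same issue affects your closing claim that $\Phi_\psi(1) = [G:S]\psi(1)$.

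The idea of passing through induction is salvageable. Frobenius reciprocity gives $\text{Ind}_S^G\mu = \sum_{\nu\in B}\langle\nu,\mu\rangle_S\,\Phi_\nu$ for $\mu\in B$, so inverting the Gram matrix $(\langle\nu,\mu\rangle_S)_{\nu,\mu\in B}$ expresses each $\Phi_\psi$ as a $\bQ$-linear combination of the induced characters $\text{Ind}_S^G\mu$, each of which vanishes off the $p$-elements of $G$ by exactly your induction-formula argument. The paper's own proof is shorter and avoids Frobenius reciprocity altogether: when $g$ is not a $p$-element, $g^G\cap S = \varnothing$, so Lemma \ref{orthogonality} gives $\sum_{\psi\in B}\overline{\Phi_\psi(g)}\,\psi(s) = 0$ for every $s\in S$, and linear independence of $B$ over $\bC$ then forces $\Phi_\psi(g) = 0$ for each $\psi$.
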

\begin{proof}
Let $g \in G$ such that $g$ is not a $p$-element. 
Then we have that $g^G \cap S = \varnothing$, hence $\sum_{\psi \in B} \overline{\Phi_{\psi}(g)}\psi(s) = 0$ for all $s \in S$ by Lemma \ref{orthogonality}. 
By the linear independence of $B$ we conclude $\Phi_{\psi}(g) = 0$ for all $\psi$. 
\end{proof}
\begin{cor}\label{zcf basis} Let zcf$_p(G)$ be the set of complex-valued class functions $f$ of $G$ with $f(g) = 0$ for all $g \in G$ that are not $p$-elements. Then $\{\Phi_{\psi}\}_{\psi \in B}$ is a $\C$-basis for zcf$_p(G)$. \end{cor}
\begin{proof}
By Lemma \ref{zcf} we know $\Phi_{\psi} \in \text{zcf}_p(G)$ for all $\psi \in B$, and by Lemma \ref{orthogonality} $P$ is full rank hence $\{\Phi_{\psi}\}_{\psi \in B}$ is linearly independent. 
Combining this with Proposition \ref{the rank theorem} we have $\text{rk}_{\C}(\text{zcf}_p(G)) = |\text{cl}(\sF)| = |B| = \text{rk}_{\C}(\langle \{\Phi_{\psi}\}_{\psi \in B} \rangle)$ and we are done.
\end{proof}

\begin{cor}\label{cor 3.6} $|G|_{p'}$ divides $\Phi_{\psi}(1)$. \end{cor}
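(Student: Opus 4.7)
My plan is to recognise $\Phi_\psi$ as the induced character $\text{Ind}_S^G \psi$, from which the divisibility will be immediate. The first step is to apply Frobenius reciprocity to the definition of the decomposition numbers: $d_{\chi\psi} = \langle \chi|_S, \psi\rangle_S = \langle \chi, \text{Ind}_S^G \psi\rangle_G$ (no conjugation issues arise, as these inner products are integers). Expanding $\text{Ind}_S^G \psi$ against the orthonormal basis $\text{Irr}(G)$ of class functions on $G$ will then show
\[\text{Ind}_S^G \psi = \sum_{\chi \in \text{Irr}(G)} \langle \text{Ind}_S^G \psi, \chi\rangle_G \chi = \sum_{\chi \in \text{Irr}(G)} d_{\chi\psi}\chi = \Phi_\psi.\]

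With this identification, the rest is immediate. Evaluating at the identity gives $\Phi_\psi(1) = (\text{Ind}_S^G \psi)(1) = [G:S]\psi(1)$, and because $S$ is a $p$-subgroup of $G$, $|S|$ divides $|G|_p$, so $[G:S] = (|G|_p/|S|)\cdot |G|_{p'}$ is divisible by $|G|_{p'}$. The divisibility of $\Phi_\psi(1)$ by $|G|_{p'}$ follows.

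I do not expect a real obstacle here: the whole argument reduces to the Frobenius reciprocity identification $\Phi_\psi = \text{Ind}_S^G \psi$, which is a standard unwinding of definitions. As a pleasant by-product, this same identification yields a one-line reproof of Lemma \ref{zcf}, since $\text{Ind}_S^G \psi$ vanishes off $G$-conjugates of $S$, and every such conjugate consists entirely of $p$-elements.
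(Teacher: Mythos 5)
Your plan hinges on the identification $\Phi_\psi = \operatorname{Ind}_S^G\psi$, derived by reading Definition~\ref{decomp matrix defn} literally as $d_{\chi\psi} = \langle\chi|_S,\psi\rangle_S$ and applying Frobenius reciprocity. The difficulty is that the $d_{\chi\psi}$ used everywhere else in Section III --- most critically in the proof of Lemma~\ref{orthogonality}, whose last equality reads $\sum_{\psi\in B}d_{\chi\psi}\psi(s)=\chi|_S(s)$, and in the Introduction's statement of Proposition~D --- are the coefficients in the decomposition $\chi|_S=\sum_{\psi\in B}d_{\chi\psi}\psi$. For a general $\bZ$-basis $B$ of $R(\sF)$ these are \emph{not} the inner products $\langle\chi|_S,\psi\rangle_S$, because $B$ is almost never orthonormal in $R(S)$. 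Under the decomposition reading one has instead $\Phi_\psi=\operatorname{Ind}_S^G\psi^{\ast}$, where $\psi^{\ast}\in\text{cf}(S)$ is the $\langle\,\cdot\,,\,\cdot\,\rangle_S$-dual basis element, and $\psi^{\ast}\neq\psi$ in general. Concretely, take $p=2$, $S=C_2=\langle(12)\rangle\leq G=S_3$, $\sF=\sF_S(G)=\sF_S(S)$, and $B=\{1_S,\,1_S+\epsilon\}$: then $1_G|_S=1_S$, $\operatorname{sgn}|_S=\epsilon=-1_S+(1_S+\epsilon)$ and $\theta|_S=1_S+\epsilon$ give $\Phi_{1_S}=1_G-\operatorname{sgn}$ with degree $0$, whereas $\operatorname{Ind}_S^G 1_S = 1_G+\theta$ has degree $3$. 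So the identification, and with it the formula $\Phi_\psi(1)=[G:S]\psi(1)$, fails.

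The good news is that your approach can be salvaged with a small extra step, and the result is then essentially a re-derivation of Proposition~\ref{regular char decomp}. Since $\langle f,\rho_S\rangle_S=f(1)$ for any class function $f$ on $S$, and $\rho_S\in R^+(\sF)$ decomposes as $\rho_S=\sum_{\mu\in B}a_\mu\mu$ with $a_\mu\in\bZ$, we get $\psi^{\ast}(1)=\langle\psi^{\ast},\rho_S\rangle_S=a_\psi\in\bZ$. Hence $\Phi_\psi(1)=\bigl(\operatorname{Ind}_S^G\psi^{\ast}\bigr)(1)=[G\!:\!S]\,a_\psi$, which is divisible by $|G|_{p'}$ because $|S|$ is a $p$-power. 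The paper's own proof takes a different route entirely: it uses Lemma~\ref{zcf} (that $\Phi_\psi$ vanishes off $p$-elements) to compute $\langle\Phi_\psi|_Q,1_Q\rangle_Q=\Phi_\psi(1)/|Q|$ for each Sylow $q$-subgroup $Q$ with $q\neq p$, and deduces divisibility from the integrality of this inner product. That argument never needs to identify $\Phi_\psi$ with an induced character and therefore sidesteps the dual-basis subtlety altogether. Your closing remark that the identification also reproves Lemma~\ref{zcf} has the same issue, and moreover would only give vanishing off $G$-conjugates of $S$, which is strictly weaker than vanishing off $p$-elements when $S$ is not Sylow in $G$ (and $G$ is merely assumed to realise $\sF$ here).
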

\begin{proof}
Let $q \neq p$ be a prime dividing $|G|$,  $Q \in \mathrm{Syl}_q(G)$ and $1_Q$ the trivial character of $Q$. We have 
\[\langle \mathrm{Res}^G_Q(\Phi_{\psi}), 1_Q \rangle_Q = \frac{1}{|Q|}\sum_{q \in Q} \Phi_{\psi}(q),\]
which is $\frac{\Phi_{\psi}(1)}{|Q|}$ as $Q \cap S = \{1\}$ and $\Phi_{\psi}(q) = 0$ for all $q \not \in S$ by Corollary \ref{zcf}. 
Since $\Phi_{\psi} \in R_{\C}(G)$, $\langle \mathrm{Res}_Q^G(\Phi_{\psi}), 1_Q\rangle_Q \in \Z$, and so $|Q|$ divides $\Phi_{\psi}(1)$. This argument holds for every $q \neq p$ dividing $|G|$, hence $|G|_{p'}$ must divide $\Phi_{\psi}(1)$.
\end{proof}

\begin{prop}\label{oths inner product}
We have  $\langle \Phi_{\psi}, \Phi_{\mu} \rangle_G = c_{\psi\mu}$ for any $\psi, \mu \in B$.
\end{prop}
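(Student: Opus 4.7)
The plan is to proceed by a direct bilinear expansion of the inner product on the left-hand side. By Definition \ref{proj indecomp defn}, $\Phi_\psi = \sum_{\chi \in \text{Irr}(G)} d_{\chi\psi}\chi$ and similarly for $\Phi_\mu$, and the decomposition numbers $d_{\chi\psi}$ are integers (hence real) by the remark following Definition \ref{decomp matrix defn}. Therefore complex conjugation acts trivially on the coefficients, and I would write
\[\langle \Phi_\psi, \Phi_\mu \rangle_G = \Big\langle \sum_{\chi \in \text{Irr}(G)} d_{\chi\psi}\chi,\; \sum_{\chi' \in \text{Irr}(G)} d_{\chi'\mu}\chi' \Big\rangle_G = \sum_{\chi, \chi' \in \text{Irr}(G)} d_{\chi\psi}d_{\chi'\mu}\langle \chi, \chi' \rangle_G.\]

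Next I would invoke the orthonormality of $\text{Irr}(G)$, namely $\langle \chi, \chi' \rangle_G = \delta_{\chi\chi'}$, which collapses the double sum to a single sum $\sum_{\chi \in \text{Irr}(G)} d_{\chi\psi}d_{\chi\mu}$. By Definition \ref{cartan matrix defn}, $C(\sF) = D(\sF)^T D(\sF)$, so this sum is precisely the $(\psi, \mu)$ entry of $C(\sF)$, which is $c_{\psi\mu}$ by definition.

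There is essentially no obstacle here: the statement is the direct fusion-theoretic analogue of the classical identity $\langle \Phi_\varphi, \Phi_{\varphi'} \rangle_G = c_{\varphi\varphi'}$ for projective indecomposables in modular character theory, and the proof reduces to bilinearity together with the orthonormality of the ordinary irreducible characters. The only point worth flagging is that the inner product is taken over $G$ rather than over $S$, which is exactly what is needed to apply orthonormality of $\text{Irr}(G)$; the characters $\Phi_\psi \in R(G)$ live naturally on $G$ by Definition \ref{proj indecomp defn} (not merely on $S$), so the expression $\langle \Phi_\psi, \Phi_\mu \rangle_G$ is well-posed without any further manipulation.
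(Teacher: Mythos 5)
Your argument is correct and is essentially identical to the paper's own proof: both expand $\langle \Phi_\psi, \Phi_\mu \rangle_G$ by bilinearity, invoke orthonormality of $\text{Irr}(G)$ to collapse the double sum to $\sum_{\chi} d_{\chi\psi}d_{\chi\mu}$, and recognise this as the $(\psi,\mu)$ entry of $D^T D = C(\sF)$. Your extra observation that the decomposition numbers are real, so that conjugation acts trivially on the coefficients, is a worthwhile clarification the paper leaves implicit.
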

\begin{proof}
$\text{Irr}(G)$ is orthonormal under $\langle -,- \rangle_G$, hence by the definition of the Cartan numbers we have
\begin{align*}
 \langle \Phi_{\psi}, \Phi_{\mu} \rangle_G &= \sum_{\chi \in \mathrm{Irr}(G)}\sum_{\chi' \in \mathrm{Irr}(G)}d_{\psi\chi}d_{\mu\chi'}\langle \chi, \chi' \rangle = \sum_{\chi \in \mathrm{Irr}(G)} d_{\psi\chi}d_{\mu\chi} 
\\ &= (D(\sF)^TD(\sF))_{\psi\mu} = (C(\sF))_{\psi\mu} = c_{\psi\mu}.
\end{align*}
\end{proof}

The Cartan numbers in Brauer character theory describe how the projective indecomposables of $G$ decompose when restricted to the $p'$-elements of $G$ (see the remark on page 25 of \cite{navarro}). We have a similar result:

\begin{cor}\label{proj restriction} 
For all $\psi \in B$, $\displaystyle \mathrm{Res}^G_S(\Phi_{\psi}) = \sum_{\mu \in B} c_{\psi\mu}\mu$. 
\end{cor}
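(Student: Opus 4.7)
The plan is to compute $\Phi_\psi|_S$ directly by unwinding the definitions and recognising the Cartan matrix as $D^T D$.

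First I would expand $\Phi_\psi$ using its definition (Definition \ref{proj indecomp defn}) to get
\[
\Phi_\psi|_S \;=\; \sum_{\chi \in \mathrm{Irr}(G)} d_{\chi\psi}\, \chi|_S.
\]
By Lemma \ref{restriction}, each $\chi|_S$ lies in $R(\sF)$, so I can re-expand it in the basis $B$ as $\chi|_S = \sum_{\mu \in B} d_{\chi\mu}\, \mu$, which is just the statement that $d_{\chi\mu}$ are the decomposition numbers with respect to $B$ (Definition \ref{decomp matrix defn}). Substituting this in and swapping the order of summation yields
\[
\Phi_\psi|_S \;=\; \sum_{\mu \in B} \Bigl(\sum_{\chi \in \mathrm{Irr}(G)} d_{\chi\psi}\, d_{\chi\mu}\Bigr)\, \mu.
\]

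Finally, I would recognise the bracketed sum as the $(\psi,\mu)$ entry of $D(\sF)^T D(\sF) = C(\sF)$, which is exactly $c_{\psi\mu}$ by Definition \ref{cartan matrix defn}. This gives the claimed identity $\Phi_\psi|_S = \sum_{\mu \in B} c_{\psi\mu}\, \mu$.

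There is essentially no obstacle here; the result is purely formal once the notation is in place. The only point that needs a moment's care is that the expansion $\chi|_S = \sum_{\mu} d_{\chi\mu}\, \mu$ is a genuine identity in $R(\sF)$ (rather than merely a pairing), which follows because $B$ is a $\bZ$-basis of $R(\sF)$ and the integers $d_{\chi\mu}$ are, by definition, the coordinates of $\chi|_S$ in this basis. Everything else is a reordering of sums.
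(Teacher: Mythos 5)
Your proof is correct and matches the paper's argument essentially line for line: expand $\Phi_\psi|_S$ via the decomposition numbers, re-expand each $\chi|_S$ in the basis $B$, swap sums, and recognise $\sum_\chi d_{\chi\psi}d_{\chi\mu}$ as the $(\psi,\mu)$ entry of $C(\sF)=D^TD$. Your added remark that $\chi|_S=\sum_\mu d_{\chi\mu}\mu$ is a genuine identity (because $B$ is a $\bZ$-basis and the $d_{\chi\mu}$ are, by intent, the coordinates of $\chi|_S$ in that basis) is exactly the right reading of Definition \ref{decomp matrix defn} and is also what the paper's proof implicitly uses.
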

\begin{proof}
By definition of the Cartan numbers, $c_{\psi\mu} = \displaystyle\sum_{\chi \in \mathrm{Irr}(G)} d_{\chi\psi}d_{\chi\mu}$, thus
\[\text{Res}^G_S(\Phi_{\psi}) = \sum_{\chi \in \mathrm{Irr}(G)} d_{\chi\psi} \mathrm{Res}^G_S(\chi) = \sum_{\chi \in \mathrm{Irr}(G)}\sum_{\mu \in B} d_{\chi\psi}d_{\chi\mu}\mu = \sum_{\mu \in B} c_{\psi\mu}\mu.\]
\end{proof}

We will now apply this theory to completely describe the orthogonal indecomposables, decomposition and Cartan matrices of essential rank $0$ systems and saturated fusion systems over $D_8$.
\begin{example}
We label the characters of $D_8 = \langle r, t \colon r^2 = t^4 = 1, rtr^{-1} = t^{-1} \rangle$ as follows: \small
\[\begin{array}{|c||c|c|c|c|c|} \hline
& 1 & t^2 & r & rt & t \\ \hline \hline
\chi_1 & 1 & 1 & 1 & 1 & 1 \\ 
\chi_2 & 1 & 1 & 1 & -1 & -1 \\
\chi_3 & 1 & 1 & -1 & 1 & -1 \\
\chi_4 & 1 & 1 & -1 & -1 & 1 \\
\phi & 2 & -2 & 0 & 0 & 0 \\ \hline
\end{array}\]
\normalsize It is routine to calculate that \small
\[\begin{array}{|c||c|c|c|c|c|}\hline
\text{Ind}(\sF_{D_8}(S_4)) & 1 & t^2 & rt & t \\ \hline \hline
\chi_1 & 1 & 1 & 1 & 1 \\ 
\chi_2 & 1 & 1 & -1 & -1 \\
\chi_3+\phi & 3 & -1 & 1 & -1 \\
\chi_4+\phi & 3 & -1 & -1 & 1 \\\hline
\end{array}\quad
\begin{array}{|c||c|c|c|}\hline
\text{Ind}(\sF_{D_8}(A_6)) & 1 & t^2 & t \\ \hline\hline
\chi_1 & 1 & 1 & 1 \\ 
\chi_4+\phi & 3 & -1 & 1 \\ 
\chi_2+\chi_3+\phi & 4 & 0 & -2 \\ \hline
\end{array}\]
\normalsize and thus with respect to the above ordering of $\text{Ind}(\sF_{D_8}(S_4))$ and $\text{Ind}(\sF_{D_8}(A_6))$ we have the following decomposition matrices:
\[D(\sF_{D_8}(S_4)) = \small\begin{pmatrix} 1 & 0 & 0 & 0 \\ 0 & 1 & 0 & 0 \\ 1 & 1 & 0 & 0 \\ 0 & 0 & 1 & 0 \\ 0 & 0 & 0 & 1 \end{pmatrix} \quad \normalsize D(\sF_{D_8}(A_6)) = 
\small\begin{pmatrix} 1 & 1 & 1 & 1 & 1 & 2 & 0 \\ 0 & 0 & 0 & 1 & 1 & 1 & 2 \\ 0 & 1 & 1 & 1 & 1 & 1 & 1 \end{pmatrix}^T . 
\]\normalsize
Thus the Cartan matrices are
\[C(\sF_{D_8}(S_4)) = \small \begin{pmatrix} 2 & 1 & 0 & 0 \\ 1 & 2 & 0 & 0 \\ 0 & 0 &1 & 0\\ 0 & 0 & 0 & 1 \end{pmatrix} \normalsize \quad C(\sF_{D_8}(A_6)) = 
\small\begin{pmatrix} 9 & 4 & 6 \\ 4 & 7 & 5 \\ 6 & 5 & 6 \end{pmatrix} \normalsize
\]
with determinants $3 = |S_4|_{2'}$ and $45 = |A_6|_{2'}$ respectively. Hence the non-zero values of the orthogonal indecomposables are:
\small\[
\begin{array}{|c||c|c|c|c|}\hline
\text{Orth}(\sF_{D_8}(S_4)) & 1 & t^2 & rt & t \\   \hline\hline
\Phi_{\chi_1} & 3 & 3 & 1 & 1 \\
\Phi_{\chi_2} & 3 & 3 & -1 & -1 \\
\Phi_{\chi_3+\phi} & 3 & -1 & -1 & 1 \\
\Phi_{\chi_4+\phi}& 3 & -1 & 1 & -1 \\\hline
\end{array} \quad
\begin{array}{|c||c|c|c|} \hline
\text{Orth}(\sF_{D_8}(A_6)) & 1 & t^2 & t \\ \hline\hline
\Phi_{\chi_1} & 45 & 5 & 1 \\
\Phi_{\chi_4+\phi} & 45 & -3 & 1 \\
\Phi_{\chi_2+\chi_3+\phi} & 45 & 1 & -1 \\ \hline
\end{array}
\] \normalsize
\end{example}
We will now focus on essential rank $0$ fusion systems. The first step is to prove Theorem D:
\begin{prop}\label{regular character}
Let $\sF = \sF_S(G)$ with $S$ not necessarily Sylow in $G$, and $\rho_S, \rho_G$ be the regular characters of $S$ and $G$. Then for any basis $B$ of $R_{\C}(\sF)$ we have
\[\rho_S = \sum_{\psi \in B} \frac{\Phi_{\psi}(1)}{[G:S]}\psi, \quad \rho_G = \sum_{\psi \in B} \psi(1)\Phi_{\psi}.\]
Furthermore, $\Phi_{\psi}(1)/[G:S]$ is an integer.
\end{prop}
\begin{proof}
We have
\begin{align*}
\mathrm{Res}^G_S(\rho_G) &= \sum_{\chi \in \text{Irr}(G)} \chi(1)\mathrm{Res}^G_S(\chi) = \sum_{\chi \in \text{Irr}(G)}\left(\chi(1)\sum_{\psi \in B} d_{\chi\phi}\psi\right) \\ 
&= \sum_{\psi \in B} \left(\psi \sum_{\chi \in \text{Irr}(G)} d_{\chi\psi}\chi(1)\right) = \sum_{\psi \in B} \psi\Phi_{\psi}(1)
\end{align*}
and because $\mathrm{Res}^G_S(\rho_G) = [G:S]\rho_S$ we have $\rho_S = \displaystyle \sum_{\psi \in B} \frac{\Phi_{\psi}(1)}{[G:S]}\psi$. Conversely, we have
{\allowdisplaybreaks\begin{align*}
\rho_G &= \sum_{\chi \in \text{Irr}(G)} \chi(1)\chi = \sum_{\chi \in \text{Irr}(G)}\left(\chi\sum_{\psi \in B} d_{\chi\phi}\psi(1)\right) \\ 
&= \sum_{\psi \in B} \left(\psi(1) \sum_{\chi \in \text{Irr}(G)} d_{\chi\psi}\chi\right) = \sum_{\psi \in B} \psi(1)\Phi_{\psi}.
\end{align*}}
Concluding the proof of the first statement, and we now show that $\frac{\Phi_{\psi}(1)}{[G:S]} \in \Z$. $\rho_S \in R_{\C}(\sF)$ implies there exist a unique set $\{a_{\psi}\}_{\psi \in B} \subset \Z$ such that $\rho_S = \sum_{\psi \in B} a_{\psi}\psi$. Because $\Q$ is a flat $\Z$-module, $B$ is a $\Q$-basis for $\Q \otimes R_{\C}(\sF)$. However, this introduces a linear relation 
\[0 = \rho_S-\rho_S = \sum_{\psi \in B}\left(a_{\psi}-\frac{\Phi_{\psi}(1)}{[G:S]}\right)\psi\]
in $\Q \otimes R_{\C}(\sF)$, which is a contradiction unless $a_{\psi} = \frac{\Phi_{\psi}(1)}{[G : S]}$. Hence $\frac{\Phi_{\psi}(1)}{[G:S]} \in \Z$.
\end{proof}

\begin{prop-example}
For an essential rank $0$ system $\sF_S(S \rtimes W)$ we write $\psi_{\chi} \in \text{Ind}(\sF)$ for the $\sF$-indecomposable corresponding to the orbit sum of $\chi$ under the conjugation action of $W$. Then the Cartan matrix w.r.t $B = \text{Ind}(\sF)$ is diagonal with 
\[C = \mathrm{Diag}_{\chi \in \mathrm{Irr}(S)/\mathrm{Out}_{\sF}(S)}\left(\sum_{\nu \in \mathrm{Irr}(S \rtimes W)} \langle \mathrm{Res}^{S \rtimes W}_S(\nu), \chi \rangle_S^2\right)\] and
\[\Phi_{\psi} = 
\mathrm{Ind}_S^{S \rtimes W}(\psi_{\chi}).
\]
\end{prop-example}
\begin{proof}
For a character $\chi$ of a normal subgroup $N \trianglelefteq G$ we write $I_G(\chi)$ for the stabiliser of $\chi$ under the conjugation action of $G$. Let $\chi \in \mathrm{Irr}(S)$. We have $(S \rtimes W)/S \cong W \cong \mathrm{Out}_{\sF}(S)$. Labelling the second isomorphism $\phi$, by definition of a semidirect product \begin{align*}
\mathrm{Ind}_S^{S \rtimes W}(\chi)(s) &= \displaystyle\sum_{\alpha \in \mathrm{Out}_{\sF}(S)}\chi(\alpha(s)) 
\\ 
&= |\text{Stab}_{\mathrm{Out}_{\sF}(S)}(\chi)|\displaystyle\sum_{\mu \in \chi^{\mathrm{Out}_{\sF}(S)}}\mu(s)
\\
&= [I_{S \rtimes W}(\chi):S]\displaystyle\sum_{w \in \tau}\chi^{w}(s)
\end{align*} 
for some transversal $\tau$ of $I_{S \rtimes W}(\chi)$ in $S \rtimes W$. Furthermore, $\mathrm{Ind}_S^{S \rtimes W}(\chi)(g) = 0$ for all $g \not \in S$ as $S \trianglelefteq S \rtimes W$. Now let $\nu \in \mathrm{Irr}(S \rtimes W)$, by Clifford's theorem we have
\[\mathrm{Res}^{S \rtimes W}_S(\nu) = \langle \mathrm{Res}^{S \rtimes W}_S(\nu) , \chi\rangle_S\sum_{w \in \tau}^{} \chi^{w}.\]
For some $\chi \in \text{Irr}(S)$. However, again by the definition of a semidirect product, we have $\chi^{w_i} = \chi \circ \alpha_i$ for some $\alpha_i \in \mathrm{Out}_{\sF}(S)$. As $\sF$ is essential rank $0$, we know by Lemma 2.17 of \cite{unique factorisation} that $\text{Ind}(\sF)$ is a unqiue basis of $\sF$-indecomposables for $R_{\C}(\sF)$, is it clear that each $\sF$-indecomposable $\sF$ is an orbit sum of some $\chi_{\psi} \in \mathrm{Irr}(S)$ under the action of $\mathrm{Out}_{\sF}(S)$. 
Therefore, the decomposition matrix has exactly one non-zero entry in each row. Therefore the Cartan matrix is diagonal with 
\[c_{\psi\psi} = \displaystyle\sum_{\nu \in \mathrm{Irr}(S \rtimes W)} d_{\nu\psi}^2 = \sum_{\nu \in \mathrm{Irr}(S \rtimes W)} \langle \mathrm{Res}_S^{S \rtimes W}(\nu), \chi_{\psi}\rangle^2.\] 
We will now determine $\mathrm{Orth}(\sF)$. By Proposition \ref{regular character} we have 
\[\sum_{\chi_{\psi} \in \text{Irr}(S)/W} \left(\frac{\Phi_{\psi}(1)}{|W|}\sum_{w \in \tau} \chi_{\phi}^w\right) = \sum_{\chi_{\psi} \in \text{Irr}(S)/W} \left(\chi(1)\sum_{w \in \tau} \chi_{\phi}^w\right).\]
Together with linear independence of irreducible characters we have $\Phi_{\psi}(1) = |W|\chi_{\psi}(1) = \text{Ind}_S^{S \rtimes W}(\chi_{\psi})(1)$. Next, for $\nu \in \text{Irr}(S \rtimes W)$ and $\mu \in \text{Irr}(S)$ such that $\langle \mathrm{Res}^{S \rtimes W}_S(\nu), \mu \rangle > 0$ we note that by Corollary \ref{proj restriction} and Frobenius reciprocity respectively, we have
\begin{align*}
\langle \Phi_{\psi}, \nu \rangle \neq 0 \iff \chi_{\psi} \in \mu^{\mathrm{Out}_{\sF}(S)} \\
\langle \text{Ind}_S^{S \rtimes W}(\chi_{\psi}), \nu \rangle \neq 0 \iff \chi_{\psi} \in \mu^{\mathrm{Out}_{\sF}(S)}.
\end{align*}
Finally, the Cartan matrix being diagonal together with Proposition \ref{oths inner product} implies that $\mathrm{Orth}(\sF)$ is orthogonal and no two associated orthogonal characters have any $S \rtimes W$-indecomposable constituents in common. Hence by Corollary \ref{zcf basis},  $\text{Ind}_S^{S \rtimes W}(\chi_{\psi})$ lies in $\langle \Phi_{\psi} \rangle_{\Q}$, but because they have equal degrees we have $\Phi_{\psi} = \text{Ind}_S^{S \rtimes W}(\chi_{\psi})$ and we are done.
\end{proof}
We conclude this subsection with two remarks. 
\begin{remark}
It is very common that each $S \rtimes W$-irreducible $\nu$ satisfies $\langle \mathrm{Res}^{S \rtimes W}_S(\nu), \chi \rangle = 0$ or $1$. When this occurs the Cartan numbers are $c_{\psi\psi} = |\{\nu \in \text{Irr}(S \rtimes W) \colon \mathrm{Res}_S^{S \rtimes W}(\nu) = \psi\}|$. When $S$ is abelian this reduces further to $c_{\psi\psi} = [S \rtimes W : I_{S \rtimes W}(\psi)]$ by the method of little groups (c.f. Section 8.2 of \cite{serre}) and the fact that $W$ is a $p'$-group.
\end{remark}\begin{remark}
Secondly, in many observed cases when $\sF = \sF_S(G)$ and $S \in \text{Syl}_p(G)$ we have $\Phi_{\psi} = \mathrm{Ind}_S^G(\psi)$, but this is not always so: Let $B$ be the basis of $R_{\C}(\sF_{C_3 \wr C_3}(A_9))$ described below: \footnotesize
\[\begin{array}{|c||c|c|c|c|c|c|c|} \hline
\sF_{C_3 \wr C_3}(A_9) & 1 & 3 & 3^2 & 3^3 & 9_1 & 9_2 \\\hline\hline
1 & 1 & 1 & 1 & 1 & 1 & 1 \\ \hline
\psi_1 & 8 & -4 & 2 & -1 & -1 & 2 \\ \hline
\psi_2 & 8 & -4 & 2 & -1 & 2 & -1 \\ \hline
\psi_3 & 8 & 5 & 2 & -1 & -1 & -1 \\ \hline
\psi_4 & 18 & 3 & -3 & 0 & 0 & 0 \\ \hline
\psi_5 & 20 & -4 & -1 & 2 & -1 & -1 \\ \hline
\end{array} 
\] \normalsize
Then $\Phi_{\psi_4} \in \mathrm{Orth}_B(\sF_{C_3 \wr C_3}(A_9))$ does not appear as $\mathrm{Ind}_{C_3 \wr C_3}^{A_9}(\chi)$ for any $\chi \in \text{Irr}(C_3 \wr C_3)$. This is the only example where $\Phi_{\psi} \neq \text{Ind}_S^G(\psi)$ for any $\psi$ in a given basis known to the author.
\end{remark}
\fakesection{The proof of Theorems A, B, C}
We will now begin applying our theory to first relate the determinant of the character table $X(\sF)$ to the determinant of the Cartan matrix $C(\sF)$ and then prove that the latter is coprime to $p$, allowing us to reduce the full fusion volume conjecture down to verifying $|\mathrm{det}(X(\sF))^2|$ is a power of $p$.

\begin{lem}[Thm. 6.9 in \cite{olsson}]\label{det fraction} 
For any fusion system $\sF$ over $S$ with $\sF = \sF_S(G)$ we have that 
\[\displaystyle|\mathrm{det}(X(\sF))|^2 = \frac{\prod_{x^{\sF} \in \text{cl}(\sF)} |C_G(x)|}{\mathrm{det}(C(\sF))}.\]
\end{lem}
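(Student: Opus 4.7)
The plan is to establish a matrix factorisation linking $P$, $C(\sF)$ and $X(\sF)$, and then take determinants using Lemma \ref{orthogonality}.

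First I would rewrite $P$ in terms of $D$ and $X(\sF)$. Let $M$ denote the matrix with entries $M_{\chi, K} = \chi(x_K)$ for $\chi \in \text{Irr}(G)$ and $K \in \text{cl}(\sF)$. Since $\chi|_S = \sum_{\psi \in B} d_{\chi\psi}\psi$ and each $x_K \in S$, we have $\chi(x_K) = \sum_{\psi \in B} d_{\chi\psi}\psi(x_K)$, which in matrix form reads $M = D X(\sF)$. On the other hand, by Definition \ref{proj indecomp defn}, $P_{\psi, K} = \Phi_\psi(x_K) = \sum_{\chi \in \text{Irr}(G)} d_{\chi\psi}\chi(x_K) = (D^T M)_{\psi, K}$. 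Combining these two identities gives
\[P = D^T D\, X(\sF) = C(\sF)\, X(\sF).\]

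Next I would feed this into Lemma \ref{orthogonality}, which states $\overline{P^T}X(\sF) = \Delta$. Substituting $P = C(\sF)X(\sF)$, and using that $C(\sF) = D^T D$ is a symmetric integer matrix (so $\overline{C(\sF)^T} = C(\sF)$), yields
\[\overline{X(\sF)^T}\, C(\sF)\, X(\sF) = \Delta.\]

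Finally, taking determinants of both sides and using $\det(\overline{X(\sF)^T}) = \overline{\det(X(\sF))}$ gives
\[|\det(X(\sF))|^2 \det(C(\sF)) = \det(\Delta) = \prod_{K \in \text{cl}(\sF)} |C_G(x_K)|,\]
and rearranging (using that $\det(C(\sF)) \neq 0$, which follows from Lemma \ref{orthogonality} since $X(\sF)$ and $P$ are full rank, forcing $C(\sF) = P X(\sF)^{-1}$ to be invertible) produces the claimed formula. The only point requiring care is the clean bookkeeping of the identity $P = C(\sF)X(\sF)$; once that is in hand, the result is an immediate consequence of the orthogonality lemma and the symmetry of the Cartan matrix, so no genuine obstacle is expected.
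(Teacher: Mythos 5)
Your proof is correct and follows essentially the same route as the paper: both use the identity $(DX)_{\chi K} = \chi(x_K)$ together with the column orthogonality packaged in Lemma \ref{orthogonality}, and then factor the determinant of $\overline{X^T}C\,X = \Delta$. The paper simply writes this as $\text{det}\bigl(\overline{(DX)^T}DX\bigr) = \text{det}(\overline{D^T}D)\,\text{det}(\overline{X^T}X)$ without explicitly passing through the identity $P = C(\sF)X(\sF)$, but that intermediate step is a valid (and arguably cleaner) way to organize the same computation.
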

\begin{proof}
Let $\chi \in \mathrm{Irr}(G)$ and $K \in \text{cl}(\sF)$, then:
\[(D(\sF)X(\sF))_{\chi, x^{\sF}} = \sum_{\mu \in B} D(\sF)_{\chi\mu}X(\sF)_{\mu, x^{\sF}} = \sum_{\mu \in B} d_{\chi \mu}\mu(x) = \text{Res}_S^G(\chi)(x).\]
So writing $x_1, ..., x_{|\text{cl}(\sF)|}$ for a set of $\sF$-conjugacy class representatives, by Lemma \ref{orthogonality}, we have that $(\overline{(D(\sF)X(\sF))^T}D(\sF)X(\sF))_{ij} = \delta_{ij}|C_G(x_{j})|$. Therefore \[\mathrm{det}(\overline{(D(\sF)X(\sF))^T}D(\sF)X(\sF)) = \prod_{i=1}^{|\text{cl}(\sF)|}|C_G(x_i)|.\] Finally 
\begin{align*}
\mathrm{det}(\overline{(D(\sF)X(\sF))^T}D(\sF)X(\sF)) &= \mathrm{det}(\overline{D(\sF)^T}D(\sF))\mathrm{det}(\overline{X(\sF)^T}X(\sF)) \\ 
&= \mathrm{det}(D(\sF)^TD(\sF))|\mathrm{det}(X(\sF))|^2 \\&=\mathrm{det}(C(\sF))|\mathrm{det}(X(\sF))|^2 \\ 
&\Rightarrow |\mathrm{det}(X(\sF))|^2 =  \frac{\prod_{i=1}^{|\text{cl}(\sF)|}|C_G(x_i)|}{\mathrm{det}(C(\sF))}.
\end{align*}
\end{proof}

\begin{cor}\label{det 2}
$|\mathrm{det}(X(\sF))|^2$ is an integer.
\end{cor}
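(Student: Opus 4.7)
The plan is to combine Lemma \ref{det fraction} with the standard algebraic-integer argument. By that lemma, $|\text{det}(X(\sF))|^2$ is expressed as a ratio of two integers (the numerator $\prod_{K \in \text{cl}(\sF)} |C_G(x_K)|$ is a product of positive integers, and the denominator $\text{det}(C(\sF))$ is an integer because $C(\sF)$ is an integer matrix by Remark \ref{integer cartan}). So we at least know $|\text{det}(X(\sF))|^2 \in \bQ$.

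The key additional observation is that $|\text{det}(X(\sF))|^2$ is also an algebraic integer. Indeed, each entry $\psi(x_K)$ of $X(\sF)$ is a virtual character value, hence lies in $\bZ[\zeta_n]$ for a suitable root of unity, and is in particular an algebraic integer. The determinant $\text{det}(X(\sF))$ is a polynomial in these entries with coefficients in $\bZ$, so it too is an algebraic integer; its complex conjugate is also an algebraic integer, and the product
\[|\text{det}(X(\sF))|^2 = \text{det}(X(\sF)) \cdot \overline{\text{det}(X(\sF))}\]
is therefore an algebraic integer.

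The conclusion follows from the standard fact that an algebraic integer which is rational must lie in $\bZ$ (its minimal polynomial over $\bQ$ is linear with integer coefficients). I do not expect any genuine obstacle here; the whole argument is essentially a one-line invocation of Lemma \ref{det fraction} plus the rational-algebraic-integer principle, and the only thing to be slightly careful about is making explicit that the entries of $X(\sF)$ are algebraic integers even though $B$ may consist of virtual (rather than genuine) characters, which poses no issue since $\bZ[\zeta_n]$ is closed under subtraction.
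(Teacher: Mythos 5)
Your argument is exactly the paper's: appeal to Lemma \ref{det fraction} and Remark \ref{integer cartan} to get rationality, note that the entries of $X(\sF)$ are algebraic integers so $|\text{det}(X(\sF))|^2$ is too, and conclude via $\bQ \cap \mathcal{R} = \bZ$. Correct and the same route, just stated in a bit more detail.
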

\begin{proof}
Since $X(\sF)$ is a matrix of complex character values, $X(\sF) \in M_{|\text{cl}(\sF)|}(\mathcal{R})$ where $\mathcal{R} \subset \C$ is the ring of algebraic integers. 
So by Lemma \ref{det fraction} and the fact that the Cartan numbers are integers we have $|\mathrm{det}(X(\sF))|^2 \in \Q \cap \mathcal{R} = \Z$.
\end{proof}
We will now prove that $(\mathrm{det}(C(\sF)), p) = 1$. We note that by Corollary 2.18 in \cite{navarro}, the Cartan matrix of $p$-Brauer characters has determinant equal to a power of $p$.
\begin{lem}\label{cartan inverse} 
$C(\sF)^{-1}$ is the matrix defined by $\displaystyle(C(\sF)^{-1})_{\psi, \mu \in B} := \sum_{x^{\sF} \in \text{cl}(\sF)} \frac{\psi(x)\overline{\mu(x)}}{|C_G(x)|}$.
\end{lem}
\begin{proof}
Write $C'$ for the matrix $\displaystyle(C')_{\psi, \mu \in B} := \sum_{x^{\sF} \in \text{cl}(\sF)} \frac{\psi(x)\overline{\mu(x)}}{|C_G(x)|}$. Then
{\allowdisplaybreaks\begin{align*}
(C'C(\sF))_{\mu\psi} &= \sum_{\theta \in B} C'_{\mu\theta}C(\sF)_{\theta\psi}  
\\ &= \sum_{\theta \in B}\sum_{x^{\sF} \in\text{cl}(\sF)} \frac{\mu(x)\overline{\theta(x)}}{|C_G(x)|}c_{\theta\psi}
\\ &= \sum_{x^{\sF} \in \text{cl}(\sF)}\frac{\mu(x)\sum_{\theta \in B} c_{\theta\psi}\overline{\theta(x)}}{|C_G(x)|} 
\\ &= \sum_{x^{\sF} \in \text{cl}(\sF)} \frac{\mu(x)\overline{\mathrm{Res}_S^G(\Phi_{\psi})(x)}}{|C_G(x)|} \quad \text{ (applying Corollary \ref{proj restriction}).} 
\end{align*}}
Writing $\Delta :=  \text{Diag}_{x^{\sF} \in \text{cl}(\sF)}(|C_G(x)|)$, then we know from Lemma \ref{orthogonality} that
\[X(\sF)\overline{P^T} = \Delta \Rightarrow (X(\sF)\Delta^{-1}\overline{P^T})_{\mu, \psi} = \sum_{x^{\sF} \in \text{cl}(\sF)} \frac{\mu(x)\overline{\Phi_{\psi}(x)}}{|C_G(x)|} = \delta_{\mu,\psi}.\]
Since $x \in S$ we may replace $\Phi_{\psi}$ in the above expression with $\mathrm{Res}^G_S(\Phi_{\psi})$, hence $C'C = I_{|\text{cl}(\sF)|}$ and the result is proven.
\end{proof}

\begin{lem}\label{integer-valued matrix}
Assume $\sF = \sF_S(G)$ and $S \in \mathrm{Syl}_p(G)$, then $|G|_{p'}^2C(\sF)^{-1}$ is an integer-valued matrix.
\end{lem}
\begin{proof}
For $\chi \in R_{\C}(S)$ we define $\tilde{\chi}$ to be the class function
\[\tilde{\chi}(g) := \begin{cases} |G|_{p'}\chi(g) & g \text{ is a $p$-element} \\ 0 & \text{otherwise} \end{cases}\]
then $\tilde{\chi} \in R_{\C}(G)$ by Brauer's characterisation of characters (c.f. Thm. 2.1 of \cite{serre}). Hence for any $\mu, \psi \in B$, we have $\langle \tilde{\mu}, \tilde{\psi} \rangle_{G} \in \Z$.
Now since $\tilde{\mu}, \tilde{\psi}$ are $0$ on anything that isn't a $p$-element:
\[\langle \tilde{\mu}, \tilde{\psi} \rangle_G = \sum_{x^G \in \text{cl}(G)} \frac{\tilde{\mu}(x)\overline{\tilde{\psi}(x)}}{|C_G(x)|} =\sum_{\substack{x^G \in \text{cl}(G) \\ x^G \cap S \neq \varnothing}}\frac{\tilde{\mu}(x)\overline{\tilde{\psi}(x)}}{|C_G(x)|}.\]
All Sylow $p$-subgroups are conjugate in $G$, so we are free to choose our $G$-conjugacy class representatives $x$ to lie in $S$:
\begin{align*} 
\sum_{\substack{x^G \in \text{cl}(G) \\ x^G \cap S \neq \varnothing}}\frac{\tilde{\mu}(x)\overline{\tilde{\psi}(x)}}{|C_G(x)|} &= \sum_{x^{\sF} \in \text{cl}(\sF)}\frac{\tilde{\mu}(x)\overline{\tilde{\psi}(x)}}{|C_G(x)|} 
 \\ &= |G|_{p'}^2 \sum_{x^{\sF} \in \text{cl}(\sF)}\frac{\mu(x)\overline{\psi(x)}}{|C_G(x)|} 
 \\ &= |G|_{p'}^2(C(\sF)^{-1})_{\mu\psi} \in \Z
\end{align*}
concluding the proof.
\end{proof}

\begin{lem}[Thm 6.7 in \cite{olsson}] \label{cartan coprime}
If $\sF = \sF_S(G)$ with $S \in \mathrm{Syl}_p(G)$, then $\mathrm{det}(C)$ is coprime to $p$.
\end{lem}
\begin{proof}
Using the notation from the proof of Lemma \ref{integer-valued matrix}, define $M$ with $(M)_{\mu, \psi \in B} = \langle \tilde{\mu}, \tilde{\psi} \rangle_G$, 
which is $|G|^2_{p'}C(\sF)^{-1}$ by Lemma \ref{integer-valued matrix}. 
So $C(\sF)M = |G|^2_{p'}I_{|\text{cl}(\sF)|}$. Since $M$ and $C(\sF)$ are integral matrices, $\mathrm{det}(M), \mathrm{det}(C(\sF)) \in \Z$. 
Therefore 
\[\mathrm{det}(C(\sF))= \frac{|G|_{p'}^{2|\text{cl}(\sF)|}}{\mathrm{det}(M)} \in \Z \Longrightarrow \mathrm{det}(C(\sF)) \text{ is coprime to } p.\]
\end{proof}
Following this result, Lemma \ref{fccl sylow}, and Corollary \ref{det 2}, we have that
\[|\mathrm{det}(X(\sF))|^2_p = \frac{\prod_{x^{\sF} \in \text{cl}(\sF)} |C_G(x)|_p}{\mathrm{det}(C(\sF))_p} = \prod_{x \in \text{fccl}(\sF)} |C_S(x)|.\]
The final step is to show $|\mathrm{det}(X(\sF))|^2_p = |\mathrm{det}(X(\sF))|^2 \iff |\mathrm{det}(X(\sF))|^2$ is a power of $p$. 

\begin{defn}
Let $\rho \colon G \rightarrow GL_n(\overline{\F_p})$ be a $p$-modular representation of $G$. Let $G_{p'}$ be the set of $p'$-elements of $G$. Then the \emph{$p$-Brauer character} $\chi$ associated to $\rho$ is the map $G_{p'} \rightarrow \overline{\F_p}^{\times}$ given by $\chi(g) = \text{tr}(\rho(g))$.

For a given prime $\ell$, we write $R^+_{\overline{\F_{\ell}}}(\sF)$ to denote the subsemiring of $R^+_{\overline{\F_{\ell}}}(S)$ consisting of $\sF$-stable $\ell$-Brauer characters. As before, we write $R_{\overline{\F}_{\ell}}(\sF)$ for the Grothendieck completion of $R^+_{\overline{\F}_{\ell}}(\sF)$. We will write cf$_{p'}(G)$ for the set of functions $\text{cl}_{p'}(G) \rightarrow \overline{\F}_p$. 
\end{defn}
We will first take care of the easy case $\ell = p$.
\begin{prop}\label{mod p prop}
If $\sF$ is a fusion system over a $p$-group $S$, then $R_{\overline{\F_{p}}}(\sF) = \langle 1_S \rangle$ where $1_S$ is the trivial $p$-Brauer character. 
\end{prop}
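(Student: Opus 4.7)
The plan is to exploit the fact that $\ell$-Brauer characters are determined by their values on $\ell$-regular conjugacy classes, and then observe that when $\ell = p$ and the underlying group is the $p$-group $S$, there is essentially nothing to consider.

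First I would recall that a $p$-Brauer character of $S$ is a class function on the set of $p$-regular conjugacy classes of $S$, that is, conjugacy classes of elements of order coprime to $p$ (see for example the setup in Chapter 2 of \cite{navarro}). Since $S$ is a $p$-group, every non-identity element of $S$ has order a positive power of $p$, so the only $p$-regular element of $S$ is the identity. Consequently there is exactly one $p$-regular conjugacy class of $S$, namely $\{1\}$, and $R(S, p)$ has rank $1$ as a free abelian group. The unique irreducible $p$-Brauer character of $S$ is the trivial character $1_S$, so $R(S, p) = \langle 1_S \rangle$.

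Next, I would note that $R(\sF, p) \subseteq R(S, p)$ by definition, since an $\sF$-stable $p$-Brauer character is in particular a $p$-Brauer character of $S$. On the other hand, $1_S \in R(\sF, p)$ because the trivial character takes the same value on every element of $S$, and is therefore automatically $\sF$-stable. Combining these two inclusions yields $R(\sF, p) = \langle 1_S \rangle$, completing the proof.

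There is no real obstacle here; the statement is essentially a tautology once one unpacks the definition of a $p$-Brauer character of a $p$-group. The content of the proposition lies entirely in making the observation that the domain of a $p$-Brauer character of $S$ collapses to the single class $\{1\}$, forcing the ring to be generated by the trivial character.
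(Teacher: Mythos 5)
Your proposal is correct and follows essentially the same route as the paper: both arguments reduce to the observation that the only $p$-regular element of a $p$-group is the identity, so there is exactly one irreducible $p$-Brauer character of $S$, the trivial one, which is automatically $\sF$-stable. The paper cites the count $|\text{IBr}_p(S)| = |\text{cl}_{p'}(S)| = 1$ where you unwind the definition directly, but the content is identical.
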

\begin{proof}
By Corollary 2.10 in \cite{navarro}, we have that $|\text{IBr}_p(G)| = |\text{cl}_{p'}(G)|$. Because $S$ is a $p$-group $|\text{IBr}_p(S)| = |\text{cl}_{p'}(S)| = 1$, and so we have $\text{IBr}_q(S) = \{1_S\}$. $1_S$ is constant thus $\sF$-stable and we are done.
\end{proof}
Our strategy is to show that $X(\sF)$ is still full rank, thus invertible, ``mod $\ell$" for all $\ell \neq p$. We begin by making this reduction mod $\ell$ rigorous.
\begin{defn}\label{mod p defn}
For a given prime $\ell$, let $\mathcal{M}_{\ell}$ be a maximal ideal of the algebraic integers $\mathcal{R}$ with $\ell\Z \subseteq \mathcal{M}_{\ell}$. 
Denote the canonical surjection $\mathcal{R} \rightarrow \mathcal{R}/\mathcal{M}_{\ell} \cong \overline{\F_{\ell}}$ by $\pi_{\ell}$. 
Given a function $f$ that maps into $\mathcal{R}$, we abuse notation slightly and write $\pi_{\ell}(f)$ for the composition $\pi_{\ell} \circ f$.
\end{defn}

\begin{lem}\label{indicator functions in span}
Let $G$ be a finite group. For $K \in \text{cl}_{\ell'}(G)$, let $i_K \in cf(G)$ be the indicator function for $K$. 
Then $\pi_{\ell}(i_K)|_S \in \langle \text{IBr}_{\ell}(G)|_S \rangle_{\overline{\F_{\ell}}}$.
\end{lem}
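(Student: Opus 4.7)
The plan is to argue that $\pi_\ell(i_K)$, viewed as an $\overline{\bF}_\ell$-valued class function on the set $G_{\ell'}$ of $\ell$-regular elements of $G$, lies in the $\overline{\bF}_\ell$-span of $\{\pi_\ell(\phi) : \phi \in \text{IBr}_\ell(G)\}$. The desired conclusion on $S$ then follows immediately by restriction, since every element of the $p$-group $S$ is $\ell$-regular (as $p \neq \ell$), i.e.\ $S \subseteq G_{\ell'}$; thus both $\pi_\ell(i_K)|_S$ and the $\pi_\ell(\phi)|_S$ are well-defined values of the corresponding functions on $G_{\ell'}$ pulled back to $S$.

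The main input is a standard fact from modular character theory: the reductions $\{\pi_\ell(\phi) : \phi \in \text{IBr}_\ell(G)\}$ form an $\overline{\bF}_\ell$-basis for the space of class functions $\text{cl}_{\ell'}(G) \to \overline{\bF}_\ell$. The dimension count is immediate from Corollary 2.10 of \cite{navarro}, which gives $|\text{IBr}_\ell(G)| = |\text{cl}_{\ell'}(G)|$. For linear independence I would identify each $\pi_\ell(\phi)$ with the trace character of the corresponding simple $\overline{\bF}_\ell G$-module: by construction $\phi$ is obtained by Brauer-lifting the eigenvalues of the $G$-action on such a simple module from $\overline{\bF}_\ell$ back to $\mathcal{R}$, so $\pi_\ell(\phi)$ recovers the original trace character, and the characters of pairwise non-isomorphic simple $\overline{\bF}_\ell G$-modules are classically linearly independent as $\overline{\bF}_\ell$-valued functions on $G_{\ell'}$.

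With this basis in hand, $\pi_\ell(i_K)|_{G_{\ell'}}$ is a class function in the relevant space and so admits a unique expansion $\pi_\ell(i_K) = \sum_{\phi \in \text{IBr}_\ell(G)} c_\phi \pi_\ell(\phi)$ on $G_{\ell'}$ for some $c_\phi \in \overline{\bF}_\ell$. Restricting both sides to $S \subseteq G_{\ell'}$ then yields $\pi_\ell(i_K)|_S = \sum_\phi c_\phi\,\pi_\ell(\phi)|_S \in \langle \text{IBr}_\ell(G)|_S \rangle_{\overline{\bF}_\ell}$, as required. The main subtlety is ensuring that linear independence survives the reduction modulo $\mathcal{M}_\ell$; the identification of $\pi_\ell(\phi)$ with the trace character of the associated simple $\overline{\bF}_\ell G$-module is precisely what supplies this, and after that the argument is routine linear algebra together with the containment $S \subseteq G_{\ell'}$.
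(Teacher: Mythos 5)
Your proposal is correct and follows the same route as the paper: both establish that the reductions of the irreducible $\ell$-Brauer characters form an $\overline{\bF}_\ell$-basis for $\overline{\bF}_\ell$-valued class functions on the $\ell$-regular elements (dimension count from $|\text{IBr}_\ell(G)| = |\text{cl}_{\ell'}(G)|$ plus linear independence after reduction), then restrict to $S \subseteq G_{\ell'}$. The only difference is presentational: the paper cites Navarro for linear independence of $\text{IBr}_\ell(G)$ over $\overline{\bF}_\ell$, while you spell out the underlying reason (identifying $\pi_\ell(\phi)$ with the trace character of the corresponding simple $\overline{\bF}_\ell G$-module), which is a welcome clarification of why linear independence survives the reduction modulo $\mathcal{M}_\ell$.
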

\begin{proof}
By Lemma 2.4 and Theorem 1.19 in \cite{navarro} we have that $\text{IBr}_q(G)$ is linearly independent over $\overline{\F_{\ell}}$. Thus $\text{rk}_{\overline{\F_{\ell}}}(\langle \text{IBr}_{\ell}(G) \rangle_{\overline{\F_{\ell}}}) = |\text{cl}_{\ell'}(G)| = \text{rk}_{\overline{\F_{\ell}}}(\text{cf}_{\ell'}(G)) \Rightarrow \text{rk}_{\overline{\F_{\ell}}}(\langle \text{IBr}_{\ell}(G) \rangle_{\overline{\F_{\ell}}}) = \text{cl}_{\ell'}(G)$, the result then follows upon restricting to $S$.
\end{proof}
\begin{lem}[Lem. 5.2.2 in \cite{benson}]\label{benson lemma}
Let $\{f_i \colon R_1 \rightarrow R_2\}_{i=1}^n$ be a set of ring homomorphisms with $R_1$ a commutative ring and $R_2$ an integral domain. If the $f_i$ are distinct, they are linearly independent over $R_2$.
\end{lem}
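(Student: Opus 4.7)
The plan is to run the classical Dedekind/Artin argument on linear independence of characters, adapted to ring homomorphisms into an integral domain. I would proceed by contradiction and induction on the size of the support of a dependence relation.

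Suppose for contradiction that the $f_i$ are linearly dependent over $R_2$, and pick a nontrivial relation $\sum_{i \in I} a_i f_i(x) = 0$ (for all $x \in R_1$) with $|I|$ minimal; by minimality every $a_i$ with $i \in I$ is nonzero, and clearly $|I| \geq 2$. Fix distinct indices $1, 2 \in I$. Since $f_1 \neq f_2$, choose $y \in R_1$ with $f_1(y) \neq f_2(y)$. The plan is then to produce a shorter relation, contradicting minimality.

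The trick is to use the relation in two different ways. First, applying it to $xy$ gives $\sum_{i \in I} a_i f_i(y) f_i(x) = 0$ for all $x$, using that each $f_i$ is a ring homomorphism. Second, multiplying the original relation by $f_1(y)$ gives $\sum_{i \in I} a_i f_1(y) f_i(x) = 0$. Subtracting the second from the first yields
\[
\sum_{i \in I \setminus \{1\}} a_i \bigl(f_i(y) - f_1(y)\bigr) f_i(x) = 0
\]
for all $x \in R_1$. This is a relation of strictly smaller support, and the coefficient at $i = 2$ is $a_2(f_2(y) - f_1(y))$.

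The one point that actually uses the hypothesis on $R_2$ is the final nonvanishing: since $a_2 \neq 0$ and $f_2(y) - f_1(y) \neq 0$, the integral domain hypothesis on $R_2$ forces $a_2(f_2(y) - f_1(y)) \neq 0$, so the shorter relation is still nontrivial. This contradicts minimality and completes the proof. The only mildly subtle point is remembering to invoke integrality precisely here; everything else is purely formal manipulation of ring homomorphisms.
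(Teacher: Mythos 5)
Your proof is correct and is exactly the classical Dedekind/Artin independence-of-characters argument, which is the content of the Benson reference the paper cites in lieu of a proof. The one implicit step worth flagging is ``clearly $|I|\geq 2$'': this uses that each $f_i$ sends $1\mapsto 1\neq 0$ in the integral domain $R_2$, so no single $f_i$ can vanish identically; under the standard convention that ring homomorphisms preserve identity this is immediate, and the rest of the argument is sound.
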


\begin{thm}\label{mod rank theorem}
Let $\sF$ be any fusion system over a $p$-group $S$. For $\ell \neq p$, $\text{rk}_{\F_{\ell}}(R_{\overline{\F}_{\ell}}(\sF)) = |\text{cl}(\sF)|$.
\end{thm}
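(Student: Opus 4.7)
The plan is to sandwich $\text{rk}_{\F_\ell}(R(\sF, \ell))$ between $|\text{cl}(\sF)|$ on both sides, using the fact that $\sF$-stable functions $S \to \overline{\F_\ell}$ form an $\overline{\F_\ell}$-vector space of dimension $|\text{cl}(\sF)|$. First I would fix a realisation $\sF = \sF_S(G)$ via Lemma \ref{generating group existence}. Since $\ell \neq p$ and $S$ is a $p$-group, every element of $S$ is $\ell$-regular, so each restriction $\phi|_S$ with $\phi \in \text{IBr}_\ell(G)$ is a genuine $\ell$-Brauer character of $S$; moreover $\phi|_S$ is $\sF$-stable because $\sF$-conjugacy agrees with $G$-conjugacy on $S$ and Brauer characters are $G$-class functions, so $\text{IBr}_\ell(G)|_S \subseteq R(\sF, \ell)$.

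For the upper bound, the $\overline{\F_\ell}$-linear independence of irreducible Brauer characters of $S$ (as used in Lemma \ref{indicator functions in span}) gives an $\F_\ell$-linear injection of $R(\sF, \ell) \subseteq R(S, \ell)$ into the space of $\overline{\F_\ell}$-valued functions on $S$. Since $\F_\ell$-linearly independent vectors in $\F_\ell^m$ remain $\overline{\F_\ell}$-linearly independent in $\overline{\F_\ell}^m$, tensoring with the flat $\F_\ell$-module $\overline{\F_\ell}$ preserves this injection, so $R(\sF, \ell) \otimes_{\F_\ell} \overline{\F_\ell}$ embeds as an $\overline{\F_\ell}$-subspace of the $\overline{\F_\ell}$-valued $\sF$-stable functions on $S$. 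Comparing dimensions yields $\text{rk}_{\F_\ell}(R(\sF, \ell)) = \dim_{\overline{\F_\ell}}(R(\sF, \ell) \otimes_{\F_\ell} \overline{\F_\ell}) \leq |\text{cl}(\sF)|$.

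For the lower bound, Lemma \ref{indicator functions in span} places $\pi_\ell(i_K)|_S \in \langle R(\sF, \ell) \rangle_{\overline{\F_\ell}}$ for each $K \in \text{cl}_{\ell'}(G)$. Each $\sF$-class $\tilde K \in \text{cl}(\sF)$ is $K \cap S$ for a unique $\ell$-regular $G$-class $K$, and $\pi_\ell(i_K)|_S$ is precisely the indicator function $i_{\tilde K}$; these $|\text{cl}(\sF)|$ indicators are $\overline{\F_\ell}$-linearly independent, so $R(\sF, \ell)$ separates $\sF$-conjugacy classes. The evaluation maps $\text{ev}_K \colon R(\sF, \ell) \to \overline{\F_\ell}$, $\phi \mapsto \phi(x_K)$, are therefore pairwise distinct ring homomorphisms into the integral domain $\overline{\F_\ell}$, and Lemma \ref{benson lemma} forces them to be $\overline{\F_\ell}$-linearly independent in $\hm_{\F_\ell}(R(\sF, \ell), \overline{\F_\ell})$, whose $\overline{\F_\ell}$-dimension equals $\text{rk}_{\F_\ell}(R(\sF, \ell))$. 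Hence $\text{rk}_{\F_\ell}(R(\sF, \ell)) \geq |\text{cl}(\sF)|$.

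The main obstacle is the bookkeeping in the upper bound, specifically justifying that $R(\sF, \ell) \otimes_{\F_\ell} \overline{\F_\ell}$ genuinely injects into the $\overline{\F_\ell}$-valued function space with $\sF$-stable image. This rests on two technical points that are standard but must be assembled carefully: that irreducible Brauer characters are $\overline{\F_\ell}$-linearly independent as honest functions, and that $\F_\ell$-linear independence is preserved under the base change $\F_\ell \hookrightarrow \overline{\F_\ell}$. The lower bound, by contrast, is a fairly direct application of the groundwork already laid down earlier in this section.
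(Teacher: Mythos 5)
Your proposal is correct and follows the paper's approach essentially exactly: both prove the lower bound via the distinct evaluation ring homomorphisms $e_{x_K}$ (distinguished using the indicator functions of Lemma \ref{indicator functions in span}) and Lemma \ref{benson lemma}, and the upper bound by confining $\sF$-stable functions to a space of $\overline{\F_\ell}$-dimension $|\text{cl}(\sF)|$. One small caution on your upper bound: the heuristic ``$\F_\ell$-linearly independent vectors in $\F_\ell^m$ remain $\overline{\F_\ell}$-linearly independent in $\overline{\F_\ell}^m$'' does not directly apply, since Brauer characters of $S$ take values in $\overline{\F_\ell}$ rather than $\F_\ell$, and $\F_\ell$-independent elements sitting inside an $\overline{\F_\ell}$-vector space need not remain $\overline{\F_\ell}$-independent (e.g.\ $1$ and a generator of $\F_{\ell^2}$ inside $\overline{\F_\ell}$); the correct justification is the one you identify immediately afterwards, namely that $\text{IBr}_\ell(S)$ is $\overline{\F_\ell}$-linearly independent as honest functions, which is what forces $R(\sF,\ell)\otimes_{\F_\ell}\overline{\F_\ell}\rightarrow \text{cf}(S)$ to be injective.
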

\begin{proof}
For any $s \in S$ we define the evaluation map $e_s \colon R_{\overline{\F}_{\ell}}(\sF) \otimes \overline{\F_{\ell}} \rightarrow \overline{\F_{\ell}}$ with $e_s(\psi) = \psi(s)$. 
It is clear that these maps are ring homomorphisms. 

Take a set of $\sF$-conjugacy class representatives $x_K$, with $x_K \in K \in \text{cl}(\sF)$, and consider the set $\mathscr{E} := \{e_{x_K} \colon K \in \text{cl}(\sF)\}$. 
We aim to show that each $e_{x_K}$ is distinct, so $\mathscr{E}$ is linearly independent over $\overline{\F_{\ell}}$ by Lemma \ref{benson lemma}.

By Corollary 2.10 in \cite{navarro}, the $\ell$-Brauer characters of $S$ form an $\overline{\F_{\ell}}$-basis for the class functions $S \rightarrow \overline{\F_{\ell}}$. Hence the indicator functions $\pi_{\ell}(i_K)$ are in $\overline{\F_{\ell}} \otimes_{\F_{\ell}} R_{\overline{\F_{\ell}}}(S)$, hence $\pi_{\ell}(i_K) \in \overline{\F_{\ell}} \otimes_{\F_{\ell}} R_{\overline{\F_{\ell}}}(\sF)$. We abuse notation and will also write $e_{x_K}$ for the extension of $e_{x_K}$ to $\overline{\F_{\ell}} \otimes_{\F_{\ell}} R_{\overline{\F_{\ell}}}(\sF)$.

Because $\pi_{\ell}(i_{K'})$ are indicator functions, we have $e_{x_K}(\pi_{\ell}(i_{K'})) = \delta_{K,K'}$ for any $K, K' \in \text{cl}(\sF)$ hence each $e_{x_K}$ is distinct and $\mathscr{E}$ is linearly independent over $\overline{\F_{\ell}}$ by Lemma \ref{benson lemma}.
Since $R_{\overline{\F_{\ell}}}(S)$ is a finite dimensional $\F_{\ell}$-algebra, $R_{\overline{\F_{\ell}}}(\sF) \otimes \overline{\F_{\ell}}$ is a finite dimensional $\overline{\F_{\ell}}$-algebra, yielding
\[\text{rk}_{\overline{\F_{\ell}}}(\langle\mathscr{E}\rangle) = |\text{cl}(\sF)| \leq \text{rk}_{\overline{\F_{\ell}}}(\mathrm{Hom}(R_{\overline{\F_{\ell}}}(\sF) \otimes \overline{\F_{\ell}}, \overline{\F_{\ell}})) = \text{rk}_{\overline{\F_{\ell}}}(R_{\overline{\F_{\ell}}}(\sF) \otimes \overline{\F_{\ell}}) = \text{rk}_{\F_{\ell}}(R_{\overline{\F_{\ell}}}(\sF)).\]
So $\text{rk}_{\F_{\ell}}(R_{\overline{\F}_{\ell}}(\sF)) \geq |\text{cl}(\sF)|$. 
We also have $R_{\overline{\F}_{\ell}}(\sF) \leq \langle \pi_{\ell}(i_k) \colon K \in \text{cl}(\sF) \rangle$, which is rank $|\text{cl}(\sF)|$.
 Therefore, $\text{rk}_{\F_{\ell}}(R_{\overline{\F_{\ell}}}(\sF)) = |\text{cl}(\sF)|$ as desired.
\end{proof}

\begin{cor}\label{b lin indep}
$\pi_{\ell}$ induces a bijection $B \mapsto \pi_{\ell}(B)$ and $\pi_{\ell}(B)$ is linearly independent over $\overline{\F_{\ell}}$.
\end{cor}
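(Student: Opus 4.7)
My plan is to identify $R(\sF)/\ell R(\sF)$ with $R(\sF, \ell)$ as $\F_\ell$-modules; once this is established, both the asserted bijection $B \leftrightarrow \pi_\ell(B)$ and the linear independence become formal consequences of $B$ being a $\bZ$-basis of $R(\sF)$ together with the fact that a module isomorphism sends bases to bases. Two preparatory observations feed into this. First, I would show that $R(\sF)$ is a direct summand of $R(S)$ as a $\bZ$-module: if $n\chi \in R(\sF)$ for some $\chi \in R(S)$ and positive integer $n$, then dividing the identity $n\chi(s) = n\chi(s')$ by $n$ inside $\bC$ shows $\chi$ is already $\sF$-stable, so $R(S)/R(\sF)$ is torsion-free and the inclusion splits. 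Tensoring gives an $\F_\ell$-linear injection $R(\sF)/\ell R(\sF) \hookrightarrow R(S)/\ell R(S)$. Second, since $\ell \neq p$ we have $\ell \nmid |S|$, so reduction mod $\ell$ yields a bijection $\text{Irr}(S) \leftrightarrow \text{IBr}_\ell(S)$ via $\chi \mapsto \pi_\ell(\chi)$, which in turn lifts to an $\F_\ell$-module isomorphism $R(S)/\ell R(S) \cong R(S, \ell)$. Composing these, and using that $\pi_\ell$ visibly preserves $\sF$-stability, produces an injective $\F_\ell$-linear map $R(\sF)/\ell R(\sF) \hookrightarrow R(\sF, \ell)$ whose image is precisely $\pi_\ell(R(\sF))$.

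The key step is then a rank comparison. By Theorem \ref{the rank theorem} the domain $R(\sF)/\ell R(\sF)$ has $\F_\ell$-rank $|\text{cl}(\sF)|$, while Theorem \ref{mod rank theorem} gives the same rank for the codomain $R(\sF, \ell)$. Since any $\F_\ell$-subspace of equal dimension must coincide with the ambient space, the injection is an isomorphism. Therefore $\pi_\ell(B)$, the image of the $\F_\ell$-basis $B \bmod \ell$, is an $\F_\ell$-basis of $R(\sF, \ell)$; its elements are pairwise distinct (which gives the bijection $B \leftrightarrow \pi_\ell(B)$), and extending scalars produces an $\overline{\F_\ell}$-basis of $R(\sF, \ell) \otimes_{\F_\ell} \overline{\F_\ell}$, which yields the required linear independence.

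The main obstacle is the subtle fact that a Brauer character being $\sF$-stable does not force its Brauer lift to be $\sF$-stable: the congruence $\pi_\ell(\chi(s)) = \pi_\ell(\chi(s'))$ in $\overline{\F_\ell}$ is strictly weaker than the equality $\chi(s) = \chi(s')$ in $\mathcal{R}$, so there is no direct reason for $\pi_\ell$ to map $R(\sF)$ onto $R(\sF, \ell)$. The surjectivity can only be extracted indirectly, through the dimension count supplied by Theorem \ref{mod rank theorem}, which forces the a priori strict containment $\pi_\ell(R(\sF)) \subsetneq R(\sF, \ell)$ to be ruled out and hence reduces the whole corollary to the two independently proved rank computations.
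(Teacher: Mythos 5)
Your proposal is correct, and it takes a genuinely different route from the paper's. The paper's proof works outward from the realizing group $G$: it Brauer-lifts each $\chi \in \text{IBr}_\ell(G)$ to $\chi' \in R(G)$, restricts $\chi'$ to $S$ (where it lands in $R(\sF)$ because $G$-class functions are $\sF$-stable), decomposes over $B$, reduces the coefficients mod $\ell$, and invokes Lemma \ref{indicator functions in span} to conclude that $\pi_\ell(B)$ spans $R(\sF,\ell) \otimes \overline{\F_\ell}$; the rank count from Theorem \ref{mod rank theorem} then forces spanning to become a basis. Your argument is instead purely module-theoretic: you observe that $R(S)/R(\sF)$ is torsion-free (a short but genuinely useful observation, proved exactly as you say), hence $R(\sF)$ is a direct summand of $R(S)$, so reduction mod $\ell$ stays injective on $R(\sF)/\ell R(\sF)$; composing with the isomorphism $R(S)/\ell R(S) \cong R(S,\ell)$, which holds because $S$ is an $\ell'$-group, gives an injection $R(\sF)/\ell R(\sF) \hookrightarrow R(\sF,\ell)$ whose image is $\pi_\ell(R(\sF))$, and the same two rank theorems (Theorem \ref{the rank theorem} for the source, Theorem \ref{mod rank theorem} for the target) force this injection to be an isomorphism. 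The two proofs depend on the same rank computations but approach them from opposite ends: the paper establishes spanning and deduces independence, while you establish injectivity and deduce surjectivity. Your version is slightly more self-contained in that it does not need Brauer lifting or Lemma \ref{indicator functions in span} at this stage (though both are already used inside Theorem \ref{mod rank theorem}), and it isolates the purity of $R(\sF) \leq R(S)$ as the structural fact that makes everything work. Your final paragraph correctly identifies the subtle point that $\pi_\ell$ need not a priori map $R(\sF)$ onto $R(\sF,\ell)$ — the paper sidesteps this by always working with restrictions of $G$-characters, whereas you confront it head-on and resolve it by dimension counting.
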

\begin{proof}
Take $\chi \in \text{IBr}_{\ell}(G)$, we may Brauer lift $\chi$ to a $L(\chi) \in R_{\C}(G)$. 
Then because $\mathrm{Res}^G_S(L(\chi))$ is $\sF$-stable we may write $\mathrm{Res}^G_S(L(\chi)) = \sum_{\psi \in B} c_{\psi}\psi$ for some $c_{\psi} \in \Z$.
Because each $c_{\psi} \in \Z \subset \mathcal{R}$, $\pi_{\ell}(c_{\psi})$ is well-defined. 
By Theorem 43 in \cite{serre} we have $\pi_{\ell}(\chi') = \chi$. So $\pi_{\ell}(\mathrm{Res}^G_S(L(\chi))) = \mathrm{Res}^G_S(\chi) = \sum_{\psi \in B} \pi_{\ell}(c_{\psi})\pi_{\ell}(\psi)$. 

By Lemma \ref{indicator functions in span} we have that $\langle \text{IBr}_{\ell}(G)|_S \rangle_{\overline{\F_{\ell}}} = R_{\overline{\F_{\ell}}}(\sF) \otimes \overline{\F_{\ell}}$, in particular:
\[R_{\overline{\F_{\ell}}}(\sF) \otimes \overline{\F_{\ell}} = \langle \text{IBr}_{\ell}(G)|_S \rangle_{\overline{\F_{\ell}}} = \langle \pi_{\ell}(B) \rangle_{\overline{\F_{\ell}}}.\]
Thus $\pi_{\ell}(B)$ spans $R_{\overline{\F_{\ell}}}(\sF) \otimes \overline{\F}_{\ell}$ as a $\overline{\F}_{\ell}$-vector space, 
which has rank $|\text{cl}(\sF)| = |B|$, so $|B| = |\pi_{\ell}(B)|$ and $\pi_{\ell}(B)$ is linearly independent.
\end{proof}
\begin{thm}\label{conj a for non-exotic}
Conjecture A holds for non-exotic saturated fusion systems.
\end{thm}
\begin{proof}
By Corollary \ref{det 2} we know that $|\mathrm{det}(X(\sF))|^2$ is an integer. 
Viewing an element of $B$ as a tuple in $\C^{|\text{cl}(\sF)|}$, then the rows of $X(\sF)$ are elements of $B$, 
so Corollary \ref{b lin indep} implies that $\pi_{\ell}(\mathrm{det}(X(\sF))) \neq 0$ for all $\ell \neq p$. 
Thus $|\mathrm{det}(X(\sF))|^2 \in \bigcap_{\ell \neq p} \Z-\ell\Z \Rightarrow |\mathrm{det}(X(\sF))|^2$ is a power of $p$. 
So by Lemma \ref{det fraction} and Lemma \ref{cartan coprime} we have that $|\mathrm{det}(X(\sF))|^2 = \prod_{x^{\sF} \in \text{cl}(\sF)} |C_G(x)|_{p}$ and $\mathrm{det}(C) = \prod_{x^{\sF} \in \text{cl}(\sF)} |C_G(x)|_{p'} = \prod_{x \in \text{fccl}(\sF)} |C_S(x)|$. 
\end{proof}

The obstructions to using Lemma \ref{generating group existence} to extend this proof method to all fusion systems are

\begin{obs}\label{obs1} If $S$ is not Sylow in $G$ then we do not have $C_S(x) \in \text{Syl}_p(C_G(x))$ for $x$ fully $\sF$-centralised, so $\prod_{x^{\sF} \in \text{cl}(\sF)} |C_G(x)|_p \neq \prod_{x \in \text{fccl}(\sF)} |C_S(x)|$. \end{obs}
\begin{obs}\label{obs2}
Lemma \ref{cartan coprime} requires $S$ to be Sylow in $G$ due to a dependence on Lemma \ref{integer-valued matrix}, where we utilise the fact that all Sylow subgroups are conjugate in order to choose $G$-conjugacy class representatives that lie in $S$.
\end{obs}

Despite these issues, we are still able to prove Theorem B:
\begin{thm}\label{exotic p power det}
Let $\sF$ be any fusion system over $S$, then $|\mathrm{det}(X(\sF))|^2$ is a power of $p$.
\end{thm}
\begin{proof}
$|\mathrm{det}(X(\sF))|^2 \in \Z$ and $\pi_{\ell}(\mathrm{det}(X(\sF))) \neq 0$ for all primes $\ell \neq p$ by Corollary \ref{det 2} and Corollary \ref{b lin indep} respectively. Hence $|\mathrm{det}(X(\sF))|^2 = p^{\alpha}$ for some $\alpha \in \mathbb{N}$ as before.
\end{proof}
We conclude by observing that the Obstruction \ref{obs1} increases the $p$-part of $|C_G(x)|$ whilst Obstruction \ref{obs2} increases the $p$-part of the determinant of the Cartan matrix. These two factors may cancel with one another. This idea leads to the following proposition:
\begin{prop}
Let $\sF$ be an exotic fusion system on $S$ and $G$ be some finite group with $\sF = \sF_S(G)$ as in Lemma \ref{generating group existence}. Then the fusion volume conjecture holds for $\sF$ if 
\[|\mathrm{det}(C(\sF))|_p = \frac{\prod_{x^{\sF} \in \text{cl}(\sF)}|C_G(x)|_p}{\prod_{x \in \text{fccl}(\sF)} |C_S(x)|}.\]
\end{prop}
\begin{proof}
We have that $|\text{det}(X(\sF))|^2_p = |\text{det}(X(\sF))|^2$ by Theorem \ref{exotic p power det} and thus under our assumption we have
\[|\text{det}(X(\sF))|^2 = \frac{\prod_{x^{\sF} \in \text{cl}(\sF)} |C_G(x)|_p}{\text{det}(C(\sF))_p} = \prod_{x \in \text{fccl}(\sF)} |C_S(x)|.\]
\end{proof}
\fakesectionnonumber{References}
\footnotesize
\renewcommand{\section}[2]{}

\vspace*{0.5cm}
\textsc{Department of Mathematics, Loughborough University, LE11 3TU,  United Kingdom} 
\newline
\textit{Email:} t.lawrence@lboro.ac.uk
\end{document}